\def\thesection{\arabic{section}}
\def\theequation{\thesection.\arabic{equation}}
\newcommand{\tl}{\tilde}
\newcommand{\e}{\epsilon}
\newcommand{\vep}{\varepsilon}
\newcommand{\pa} {\partial}
\newcommand{\al} {\alpha}
\newcommand{\ba} {\beta}
\newcommand{\de} {\delta}
\newcommand{\Ga} {\Gamma}
\newcommand{\Om} {\Omega}
\newcommand{\sg}{\sigma}
\newcommand{\ra} {\rightarrow}
\newcommand{\ov}{\overline}
\newcommand{\De} {\Delta}
\newcommand{\ka}{\kappa}
\newcommand{\noi} {\noindent}
\newcommand{\na} {\nabla}
\newcommand{\mb} {\mathbb}
\newcommand{\mc} {\mathcal}
\def\theequation{\@arabic{\c@section}.\@arabic{\c@equation}}
\def\QED{\hfill {$\square$}\goodbreak \medskip}
\newtheorem{Theorem}{Theorem}[section]
\newtheorem{Proposition}[Theorem]{Proposition}
\newtheorem{Corollary}[Theorem]{Corollary}
\newtheorem{Remark}{Remark}
\newtheorem{Definition}{Definition}
\def\Xint#1{\mathchoice
	{\XXint\displaystyle\textstyle{#1}}%
	{\XXint\textstyle\scriptstyle{#1}}%
	{\XXint\scriptstyle\scriptscriptstyle{#1}}%
	{\XXint\scriptscriptstyle\scriptscriptstyle{#1}}%
	\!\int}
\def\XXint#1#2#3{{\setbox0=\hbox{$#1{#2#3}{\int}$ }
		\vcenter{\hbox{$#2#3$ }}\kern-.6\wd0}}
\newenvironment{poliabstract}[1]
   {\begin{abstract}}
   {\end{abstract}}
\begin{document}
	\title
	{A note on the global regularity results for strongly nonhomogeneous $p,q$-fractional problems and applications}
	 
	\author{  Jacques Giacomoni$^{\,1}$ \footnote{e-mail: {\tt jacques.giacomoni@univ-pau.fr}}, \ Deepak Kumar$^{\,2}$\footnote{e-mail: {\tt deepak.kr0894@gmail.com}},  \
		and \  Konijeti Sreenadh$^{\,2}$\footnote{
			e-mail: {\tt sreenadh@maths.iitd.ac.in}} \\
		\\ $^1\,${\small Universit\'e  de Pau et des Pays de l'Adour, LMAP (UMR E2S-UPPA CNRS 5142) }\\ {\small Bat. IPRA, Avenue de l'Universit\'e F-64013 Pau, France}\\  
		$^2\,${\small Department of Mathematics, Indian Institute of Technology Delhi,}\\
		{\small	Hauz Khaz, New Delhi-110016, India} }

	\date{}
	
	\maketitle
 \selectlanguage{English}
\begin{poliabstract}{Abstract}
In this article, we communicate with the glimpse of the proofs of new global regularity results for weak solutions to a class of problems involving fractional $(p,q)$-Laplacian, denoted by $(-\Delta)^{s_1}_{p}+(-\Delta)^{s_2}_{q}$, for $s_2, s_1\in (0,1)$ and $1<p,q<\infty$. We also obtain the boundary H\"older continuity results for the weak solutions to the corresponding problems involving at most critical growth nonlinearities. These results are almost optimal. 
Moreover, we establish Hopf type maximum principle and strong comparison principle. 
As an application to these new results, we prove the Sobolev versus H\"older minimizer type result, which provides the multiplicity of solutions in the spirit of seminal work \cite{Brezis-Nirenberg}.

\end{poliabstract}

\selectlanguage{French}
\begin{poliabstract}{R\'esum\'e}
Dans cette note, nous présentons de nouveaux résultats de régularité Höldérienne des solutions faibles d'une classe de problèmes faisant intervenir des opérateurs de diffusion fractionnaire non linéaires et non homogènes de la forme $(-\Delta)^{s_1}_{p}+(-\Delta)^{s_2}_{q}$ avec $s_2, s_1\in (0,1)$ et $1<p,q<\infty$. Précisément, nous obtenons des résultats de régularité intérieure et près du bord pour les solutions faibles de ces problèmes alors que la nonlinéarité du membre de droite est de croissance critique au sens de l'injection de Sobolev. Ce résultat étend les principaux résultats de régularité intérieure de \cite{brascoH} où le cas de l'opérateur homogène $(-\Delta)^{s_1}_{p}$ est investi, améliore de façon optimale et complète ceux de \cite{DDS}. 

Nous établissons par ailleurs un lemme de Hopf et un principe de comparaison fort pour cette classe de problèmes. Nous appliquons ensuite ces résultats pour démontrer la propriété que les minimiseurs locaux de l'énergie associée dans $C^\al(\overline{\Omega})$ avec $\al\in (0,s_1)$ sont aussi minimiseurs locaux dans $W^{s_1,p}_0(\Omega)$ dans l'esprit de l'article pionnier \cite{Brezis-Nirenberg}. Ceci conduit à des nouveaux résultats de muliplicité de solutions pour ces problèmes non locaux et fortement non homogènes.
\end{poliabstract}
\noi \textbf{Keywords:} Fractional $(p,q)$-Laplacian, non-homogeneous nonlocal operator, 
 H\"older continuity up to the boundary, maximum principle, strong comparison principle.
\\
\noi \textit{2010 Mathematics Subject Classification:}  35J60, 35R11, 35B45, 35D30.
\section {Introduction}
 In this note we study the H\"older continuity results and maximum principle for weak solutions to the following problem: 
 \begin{equation*}
 \begin{array}{rllll}
 (-\Delta)^{s_1}_{p}u+(-\Delta)^{s_2}_{q}u  = f \quad \text{in } \Om,
 \end{array}
 \tag{$\mc P$}\label{probM}
 \end{equation*}
 where $\Om\subset\mb R^N$ is a bounded domain with $C^{1,1}$ boundary, $2\leq q,p<\infty$,  $0<s_2\leq s_1<1$  and $f\in L^\infty_{\mathrm{loc}}(\Om)$.  The fractional $p$-Laplace operator $(-\Delta)^{s}_{p}$ is defined as
 \begin{equation*}
 	{(-\Delta)^{s}_pu(x)}= 2\lim_{\vep\to 0}\int_{\mathbb R^N\setminus B_{\vep}(x)} \frac{|u(x)-u(y)|^{p-2}(u(x)-u(y))}{|x-y|^{N+ps}}dy.
 \end{equation*} 
 The leading differential operator, $(-\Delta)^{s_1}_{p}+(-\Delta)^{s_2}_{q}$, in problem \eqref{probM} is known as the fractional $(p,q)$-Laplacian. The operator is non-homogeneous in the sense that for any $t>0$, there does not exist any $\sg\in\mb R$ such that $((-\Delta)^{s_1}_{p}+(-\Delta)^{s_2}_{q})(tu)=t^\sg ((-\Delta)^{s_1}_{p}u+(-\Delta)^{s_2}_{q}u)$ holds for all $u\in W^{s_1,p}(\Om)\cap W^{s_2,q}(\Om)$.\par
 The regularity results and maximum principles for the equations involving the homogeneous nonlocal operators are well known, see  \cite{brascoH,delpezzo,iann,ianndist,ros}, whereas the regularity issues for the problems involving the fractional $(p,q)$-Laplacian is still developing and only few continuity results are available, see for instance, \cite{JDS2,JDSjga}.
The strong non-homogeneity, in this case, feature creates an additional difficulty while handling the distance function in order to prove the boundary behavior of the weak solutions.\par 
In the present work, we obtain interior regularity results  for local weak solutions, which improves the regularity results of \cite{JDS2} for larger class of exponents.   Our proof of the improved local H\"older continuity result (see Theorem \ref{impintreg}) relies on Moser's iteration technique to obtain suitable embedding for Besov spaces into the H\"older spaces. Here, we stress that we do not assume any order relation on the exponents $p$ and $q$. Subsequently, we establish the asymptotic behavior of the fractional $q$-Laplacian ($(-\De)_q^{s_2}$) of the distance function $d^{s_1}$ near the boundary, which in turn gives us almost optimal (and optimal in some cases, see Remark \ref{rem1opt}) boundary behavior of the weak solution. 
This coupled with the interior H\"older regularity result of Theorem \ref{impintreg} provides the almost optimal H\"older continuity result. As a consequence of this, we obtain the Hopf type maximum principle for non-negative solutions. Additionally, under the restriction that the fractional $q$-Laplacian of the subsolution is bounded from below, we prove a strong comparison principle. In the last section, as an application to these results, we obtain the multiplicity results for problem involving the subcritical nonlinearity by establishing Sobolev versus H\"older type minimization result for nonlinearities with atmost critical growth.
Complete proofs of the regularity main results and other applications (in particular to singular problems) can be found in \cite{JDS2}.

\section{Preliminaries and main results}
 We denote $[t]^{p-1}:=|t|^{p-2}t$, for all $p>1$ and $t\in\mb R$. For $(\ell,s)\in\{(p,s_1),(q,s_2)\}$ and for $S_1\times S_2\subset\mb R^{N}\times\mb R^N$, we set
 \begin{align*}
 	A_\ell(u,v,S_1\times S_2)= \int_{S_1\times S_2}\frac{[u(x)-u(y)]^{\ell-1}(v(x)-v(y))}{|x-y|^{N+\ell s}}~dxdy.
 \end{align*}
 We define the distance function as $d(x):=\mathrm{dist}(x,\mb R^N\setminus\Omega)$ and a neighborhood of the boundary as $\Om_{\varrho}:=\{ x\in \Om \ : \ d(x)<\varrho \}$,  for $\varrho>0$. \\ We will follow the notation $p^*_{s_1}:=Np/(N-ps_1)$ if $N>ps_1$, otherwise an arbitrarily large number.
 \subsection{Function Spaces}
 For $E\subset\mathbb{R}^N$, $p\in [1,\infty)$ and $s\in (0,1)$, the fractional Sobolev space $W^{s,p}(E)$ is defined as 
 \begin{align*}
 	W^{s,p}(E):= \left\lbrace u \in L^p(E): [u]_{W^{s,p}(E)}  < \infty \right\rbrace
 \end{align*}
 \noi endowed with the norm $\|u\|_{W^{s,p}(E)}:=  \|u\|_{L^p(E)}+ [u]_{W^{s,p}(E)}$,
 where \begin{align*}
 	[u]_{W^{s,p}(E)}:=  \left(\int_{E}\int_{E} \frac{|u(x)-u(y)|^p}{|x-y|^{N+sp}}~dxdy  \right)^{1/p}.
 \end{align*} 
For any (proper) subset $E$ of $\mb R^N$, we have 
 \begin{align*}
 	W^{s,p}_0(E):=\{ u\in W^{s,p}(\mb R^N) \ : \ u=0 \quad\mbox{in }\mb R^N\setminus E \}
 \end{align*}
 which is a uniformly convex Banach space when equipped with the norm  $[\cdot]_{W^{s,p}(\mb R^N)}$ (hereafter, it will be denoted by $\|\cdot\|_{W^{s,p}_0(E)}$). 
 Next, we define 
 $$\mc W(E):= W^{s_1,p}(E)\cap W^{s_2,q}(E)$$
 equipped with the norm $\|\cdot\|_{\mc W(E)}:=\|\cdot\|_{W^{s_1,p}(E)}+\|\cdot\|_{W^{s_2,q}(E)}$. The space $\mc W_0(E)$ is defined analogously. We say that $u\in \mc W_{\rm loc}(E)$ if  $u\in\mc W(E')$, for all $E'\Subset E$.
  Note that for $1<q\leq p<\infty$, $0<s_2<s_1<1$ and the domain $E$ with Lipschitz boundary, $W^{s_1,p}_0(E)$ coincides with  the space $X_{p,s_1}$, as defined in \cite{DDS}. Indeed, from \cite[Lemma 2.1]{DDS}, we have 
  \begin{align*}
  	\|u\|_{W^{s_2,q}_0(E)}\leq C \|u\|_{W^{s_1,p}_0(E)}, \quad \text{for all } \; u \in W^{s_1,p}_0(E),
  \end{align*}
 for some $C=C(|E|,\;N,\; p,\;q,\;s_1,\;s_2)>0$. Additionally, we define
\begin{align*}
 	\widetilde{W}^{s,p}(\Om):= \bigg\{u\in L^{p}_{\rm loc}(\mb R^N) \ : \ \exists\Om'\Supset\Om \mbox{ s.t. } u\in W^{s,p}(\Om'), \ \int_{\mb R^N}\frac{|u(x)|^{p-1}}{(1+|x|)^{N+ps}}dx<\infty  \bigg \}.
 \end{align*}
 \begin{Definition}
 Let $u:\mb R^N\to \mb R$ be a measurable function and $0<m,\alpha<\infty$. We define the tail space and the nonlocal tail, respectively, as below:
 	{\small\begin{align*}
 	  L^{m}_{\al}(\mb R^N) = \bigg\{ u\in L^{m}_{\rm loc}(\mb R^N) : \int_{\mb R^N} \frac{|u(x)|^{m}dx}{(1+|x|)^{N+\al}} <\infty \bigg\}, \ \  T_{m,\al}(u;x_0,R)=\left(R^{\al}\int_{B_R(x_0)^c} \frac{|u(y)|^{m}dy}{|x_0-y|^{N+\al}} \right)^\frac{1}{m}.
 	\end{align*}} 
 Set $T_{m,\al}(u;R)=T_{m,\al}(u;0,R)$. We will follow the notation $T_{p-1}(u;x,R):=T_{p-1,s_1p}(u;x,R)$ and $T_{q-1}(u;x,R):=T_{q-1,s_2q}(u;x,R)$, unless otherwise stated.
 \end{Definition}

 \subsection{Statements of main results}
 In this subsection, we state our main results. We start with the definition of local weak solution.
 \begin{Definition}[Local weak solution]
 A function $u\in \mc W_{\rm loc}(\Om)\cap L^{p-1}_{s_1p}(\mb R^N) \cap L^{q-1}_{s_2q}(\mb R^N)$ is said to be a local weak solution of problem \eqref{probM} if 
 	\begin{align*}
 		A_p(u,\phi,\mb R^N\times \mb R^N) + A_q (u, \phi,\mb R^N\times \mb R^N)= \int_{\Om} f\phi dx,
 	\end{align*}
 	for all $\phi\in \mc W(\Om)$ with compact support contained in $\Om$. 
 \end{Definition}
 Our first main theorem is the following higher local H\"older continuity result.
 \begin{Theorem}\label{impintreg}
 Suppose that $(q-p+2)s_2<2$. Let $u\in \mc W_{\rm loc}(\Om)\cap L^{p-1}_{s_1p}(\mb R^N) \cap  L^{q-1}_{s_2q}(\mb R^N)$ be a locally bounded local weak solution to problem \eqref{probM}. Then, for every $\sg\in (0,\Theta)$, $u\in C^{0,\sg}_{\mathrm{loc}}(\Om)$, where
 	\begin{align*}
 		\Theta\equiv\Theta(p,s_1,q,s_2)=\begin{cases}
 			\min\{1, ps_1/(p-1)\} \mbox{ if }qs_2<ps_1+2(1-s_1),\\
 			\min\{1, qs_2/(q-1)\} \mbox{ if }ps_1<qs_2.
 		\end{cases}
 	\end{align*} 
 Moreover, for $B_{2\bar R_0}(x_0)\Subset\Om$ with $\bar R_0\in (0,1)$, there holds
 	\begin{align*}
 		[u]_{C^{\sg}(B_{\bar R_0/2}(x_0))}\leq C \big(K_2(u) ( \|u\|_{W^{s_1,p}(B_{\bar R_0}(x_0))} + 1) \big)^{j_\infty}
 	\end{align*}
 where  $C=C(N,s_1,p,s_2,q,\sg)>0$ is a constant and $K_2$ is given by
 	{\small\begin{align*}
 			K_2=1+T_{p-1}(u;x_0,\bar R_0)^{p-1} +T_{q-1}(u;x_0,\bar R_0)^{q-1} 
 			+\|u\|^{\frac{(\ell_1+j_\infty)(\ell_1-1)}{\ell_1-2}}_{L^\infty(B_{\bar R_0}(x_0))}+\| u \|^{q-1}_{L^\infty(B_{R_0})} +\|f\|_{L^\infty(B_{\bar R_0}(x_0))}
 	\end{align*}} 
 with $\ell_1=\max\{p,q\}$ and $j_\infty\in\mb N$ depends only on $N,\sg$ and $(p,s_1)$ or $(q,s_2)$. 
 \end{Theorem}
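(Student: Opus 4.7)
The strategy is to adapt the Moser-type discrete differentiation scheme of \cite{brascoH}, which is designed for the homogeneous fractional $p$-Laplacian, to the strongly non-homogeneous $(p,q)$ setting. The assumption $(q-p+2)s_2<2$, equivalently $qs_2<ps_2+2(1-s_2)$, is what will let us treat the contribution from $(-\De)_q^{s_2}$ as a lower-order perturbation of the leading term when the $p$-scale is dominant, and symmetrically in the other regime.

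First, I would localize. Fix $B_{2\bar R_0}(x_0)\Subset\Om$, let $\eta$ be a smooth cutoff supported in $B_{\bar R_0}(x_0)$, and exploit translation invariance of the equation: for $|h|$ small, $u_h(x):=u(x+h)$ is again a local weak solution on a slightly smaller ball. Testing the equations for $u$ and $u_h$ against $\phi=\eta^{\ga}[\tau_h u]^{\ba-1}$ (suitably truncated), where $\tau_h u=u_h-u$, and invoking the standard monotonicity inequality for $[\,\cdot\,]^{m-1}$, I would derive a Caccioppoli-type finite difference estimate of the form
\begin{align*}
\int\!\!\int \eta^\ga\frac{|\tau_h u(x)-\tau_h u(y)|^p|\tau_h u|^{\ba-1}}{|x-y|^{N+ps_1}}dxdy
 \le \mathrm{I}_p(\eta,\tau_h u) + \mathrm{I}_q(\eta,\tau_h u) + \mathrm{Tail} + \mathrm{RHS},
\end{align*}
with the tail terms controlled by $T_{p-1}(u;x_0,\bar R_0)^{p-1}$ and $T_{q-1}(u;x_0,\bar R_0)^{q-1}$, and $\mathrm{RHS}$ involving $\|f\|_{L^\infty}$.

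Second, I would handle the cross term $\mathrm{I}_q$ coming from $(-\De)_q^{s_2}$. Writing $|\tau_h u(x)-\tau_h u(y)|^{q-2}=|\tau_h u(x)-\tau_h u(y)|^{p-2}|\tau_h u(x)-\tau_h u(y)|^{q-p}$ and using $u\in L^\infty_{\mathrm{loc}}$ together with the kernel weight $|x-y|^{-N-qs_2}$, the condition $(q-p+2)s_2<2$ allows these factors to be distributed by Young's inequality against the leading $p$-term, at the cost of a power $\|u\|_{L^\infty(B_{\bar R_0}(x_0))}^{(\ell_1-1)(\ell_1+j_\infty)/(\ell_1-2)}$ with $\ell_1=\max\{p,q\}$; this is the precise origin of the corresponding term in the constant $K_2$. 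In the complementary regime $ps_1<qs_2$ I would interchange the roles of the two operators.

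Third, I would run the iteration. The above Caccioppoli-type inequality, together with the fractional Sobolev embedding on $B_{\bar R_0}(x_0)$, gives on shrinking balls an improvement from a Besov-Nikolskii norm of exponent $s$ to one of exponent $s'>s$, with explicit control of the constants in terms of $K_2(u)$ and $\|u\|_{W^{s_1,p}(B_{\bar R_0}(x_0))}$. After $j_\infty$ iterations, with $j_\infty$ depending only on $N,\sg$ and the dominant pair among $(p,s_1),(q,s_2)$, the scheme saturates at any $\sg<\Theta$, yielding a bound
\begin{align*}
\sup_{0<|h|<r}\frac{\|\tau_h u\|_{L^\infty(B_{\bar R_0/2}(x_0))}}{|h|^{\sg}} \le C\bigl(K_2(u)(\|u\|_{W^{s_1,p}(B_{\bar R_0}(x_0))}+1)\bigr)^{j_\infty}.
\end{align*}
The embedding of this Nikolskii-type space into $C^{0,\sg}_{\mathrm{loc}}$ closes the argument.

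The main obstacle is the cross term $\mathrm{I}_q$: one must show quantitatively that the condition $(q-p+2)s_2<2$ (resp.\ its twin in the other regime) is exactly what enables a clean absorption of the $q$-Laplacian finite difference into the leading $p$-Laplacian one, uniformly along the iteration. Being too generous with Young's inequality blows up the dependence on $\|u\|_{L^\infty}$ and breaks the geometric improvement of the fractional differentiability exponents $s_n\uparrow\Theta$, so the bookkeeping of powers at each step is the delicate part of the proof.
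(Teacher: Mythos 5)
Your plan is substantively the same as the paper's: both adapt the Brasco--Lindgren--Schikorra discrete-differentiation scheme, both exploit translation invariance and test with a truncated power of a difference quotient to obtain a Caccioppoli-type inequality in Besov/Nikolskii norms, both absorb the $(-\De)^{s_2}_q$ contribution into the leading $(-\De)^{s_1}_p$ term by bounding the excess power of $|u(x)-u(y)|$ through $\|u\|_{L^\infty_{\rm loc}}$ and redistributing the kernel weight precisely thanks to $(q-p+2)s_2<2$, and both swap the roles of $(p,s_1)$ and $(q,s_2)$ in the regime $ps_1<qs_2$. Two points, however, are imprecise in your sketch and are where the paper's argument is genuinely more careful. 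First, the self-improving inequality that drives the iteration is formulated for the \emph{second}-order difference $\de^2_h u=u_{2h}+u-2u_h$, not the first-order difference $\tau_h u$ that you carry throughout; the passage from the test function $\eta^\ga J_{\ba}(\de_h u)$ to a bound on $\sup_{|h|}\big\|\de^2_h u/|h|^{s_1}\big\|_{L^m}$ is exactly what makes the scheme close, and dropping the second difference loses the key monotonicity/cancellation structure. Second, the paper's proof has a two-stage structure that your one-pass ``$s\mapsto s'>s$'' iteration blurs: one first runs a Moser-type iteration in the \emph{integrability} exponent $m$ at the \emph{fixed} regularity order $s_1$ (the inequality \eqref{eqitr}, controlling the $L^{m+1}$ norm of $\de^2_h u/|h|^{s_1}$ by the $L^m$ norm plus lower-order data), reaching $C^{0,\sg}_{\rm loc}$ for every $\sg<s_1$; only then does one run a separate bootstrap in the regularity exponent, along the lines of \cite[Theorem~5.2]{brascoH}, to reach any $\sg<\Theta$ with $\Theta>s_1$. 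Your sketch does not make this split, and as written it is not clear that a single iteration in the differentiability exponent can start from the assumed $\mc W_{\rm loc}$ regularity and reach $\Theta$ while keeping constants of the stated form.
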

\begin{Remark}
 We remark that for $1<q\leq p<\infty$, the conclusion of Theorem \ref{impintreg} holds for some $\sg<\min\{\frac{ps_1}{p-1},\frac{qs_2}{q-1}\}$. See Theorem 2.1 and Corollary 2.1 of \cite{JDSjga} for details.
\end{Remark}
Next, we have the following global H\"older continuity result.
\begin{Theorem}\label{bdryreg}
  Suppose that $(q-p+2)s_2<2$. Let $u\in \mc W_0(\Om)$ be a solution to problem \eqref{probM} with $f\in L^\infty(\Om)$. Then, for every $\sg\in(0,s_1)$, $u\in C^{0,\sg}(\ov\Om)$. Moreover, 
 \begin{align}\label{holderbd}
 	\| u \|_{C^\sg(\ov\Om)} \leq C,
 \end{align}
 where $C=C(\Om,N,p,s_1,q,s_2,\sg,\|f\|_{L^\infty(\Om)})>0$ is a constant (which depends as a non-decreasing function of $\|f\|_{L^\infty(\Om)}$). 
\end{Theorem}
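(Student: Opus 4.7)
The plan is to combine the interior Hölder regularity already furnished by Theorem \ref{impintreg} with a sharp boundary decay estimate of the form $|u(x)|\leq C\, d(x)^{s_1}$. As a preliminary, I would first establish $u\in L^\infty(\Om)$ by a De Giorgi or Moser iteration applied directly to the equation with $f\in L^\infty(\Om)$; this is standard for fractional $(p,q)$-Laplacians, uses only $u\in\mc W_0(\Om)$ and the zero exterior datum, and yields a bound depending monotonically on $\|f\|_{L^\infty(\Om)}$.

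The second and central step is a barrier construction. I would consider $w(x)=C_1 d(x)^{s_1}$, suitably truncated outside $\Om$, and show that on a tubular neighborhood $\Om_\varrho$ one has $(-\De)^{s_1}_p w+(-\De)^{s_2}_q w \ge \|f\|_{L^\infty(\Om)}$ pointwise, provided $C_1$ is chosen large. The $p$-part is controllable because $d^{s_1}$ is the natural homogeneous barrier for $(-\De)^{s_1}_p$, giving a bounded contribution near $\pa\Om$ (cf.\ \cite{iann,ianndist}). The delicate contribution is the lower-order $q$-term: a careful asymptotic analysis of $(-\De)^{s_2}_q (d^{s_1})$ near $\pa\Om$ (flattening the boundary using the $C^{1,1}$ hypothesis, Taylor-expanding $d^{s_1}(x)-d^{s_1}(y)$ and checking integrability of the kernel $|x-y|^{-N-q s_2}$ after raising to the power $q-1$) is exactly what the hypothesis $(q-p+2)s_2<2$ is tailored to ensure. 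Once the barrier is in place, a weak comparison principle applied to $\pm u$ and $C_2 w$ on $\Om_\varrho$, combined with the $L^\infty$ bound away from the boundary, yields the desired decay $|u(x)|\le C\, d(x)^{s_1}$ on all of $\Om$.

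To upgrade this decay into a global $C^{0,\sg}$ estimate for $\sg\in(0,s_1)$, I would employ a classical two-point scaling argument. Fix $x,y\in\ov\Om$ and set $r=|x-y|$, $\rho=\min\{d(x),d(y)\}$. If $r\le \rho/2$, both points lie deep in a ball $B_{\rho}(x)\Subset\Om$, and I apply the interior estimate of Theorem \ref{impintreg} after rescaling to a ball of unit radius. The rescaled function absorbs powers of $\rho$ that are controlled by the boundary decay $|u|\lesssim d^{s_1}$ (which also bounds the tails $T_{p-1}(u;x,\rho)$ and $T_{q-1}(u;x,\rho)$ uniformly), giving $|u(x)-u(y)|\le C\,\rho^{s_1-\sg}\,r^\sg \le C\,r^\sg$. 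If instead $r\ge \rho/2$, the decay estimate directly yields $|u(x)-u(y)|\le C(d(x)^{s_1}+d(y)^{s_1})\le C\, r^{s_1}\le C\,(\mathrm{diam}\,\Om)^{s_1-\sg}\,r^\sg$. Interpolating the two regimes gives \eqref{holderbd} with the stated structural dependence of the constant.

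The main obstacle, in my view, is squarely the boundary barrier step: quantifying $(-\De)^{s_2}_q(d^{s_1})$ uniformly near $\pa\Om$ with the sharp exponent restriction $(q-p+2)s_2<2$. The subtlety is that the barrier $d^{s_1}$ is modelled on the \emph{other} operator $(-\De)^{s_1}_p$, so the order-matching that is automatic in the homogeneous setting fails here; the cancellation in the singular integral has to be extracted by hand, and the exponent condition arises precisely from the integrability budget at the boundary. The scaling step in the global Hölder argument is conceptually routine but technically non-trivial because the operator is non-homogeneous: one cannot simply normalise $u$ by a scalar, and instead must track the explicit structural dependence of the interior constant in Theorem \ref{impintreg} (in particular the factors $K_2$ and $j_\infty$) through the rescaling.
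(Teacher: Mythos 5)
Your overall architecture matches the paper's: interior regularity from Theorem~\ref{impintreg} plus a boundary barrier/comparison step, combined by a two-scale (interior vs.\ near-boundary) Hölder argument. But the barrier itself is where your proposal breaks.

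You propose $w = C_1 d^{s_1}$ and want $(-\De)^{s_1}_p w + (-\De)^{s_2}_q w \geq \|f\|_{L^\infty(\Om)}$ on a collar $\Om_\varrho$. This fails in general, for two compounding reasons. First, $(-\De)^{s_1}_p(d^{s_1})$ is the \emph{critical} exponent: near $\pa\Om$ it is merely bounded (in the half-space model it is identically zero), and for a curved $C^{1,1}$ boundary it is an $L^\infty$ function with no definite sign. Hence the $p$-term produces no strictly positive, let alone divergent, lower bound to absorb $\|f\|_{L^\infty}$. Second, the $q$-term $(-\De)^{s_2}_q(d^{s_1})$ need not be bounded: since $s_1 > s_2$ is possible, the exponent $s_1$ exceeds the critical exponent $s_2$ for the $q$-operator, and when $s_2 < s_1 < q's_2$ the model computation gives $(-\De)^{s_2}_q(d^{s_1}) \sim -c\,d^{\,s_1(q-1)-qs_2} \to -\infty$. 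Crucially, the non-homogeneity prevents you from fixing this by enlarging $C_1$: scaling $w \mapsto C_1 w$ multiplies the $p$-term by $C_1^{p-1}$ and the $q$-term by $C_1^{q-1}$, so a bounded (possibly zero or negative) $p$-contribution can never be made to dominate a $-\infty$ $q$-contribution for any choice of $C_1$. You have also misattributed the role of the hypothesis $(q-p+2)s_2 < 2$: that is the condition Theorem~\ref{impintreg} needs for the \emph{interior} iteration; it does not arise in, and does not rescue, the boundary barrier.

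The paper's Steps (a)--(b) circumvent this precisely by working with $d^\alpha$ for $\alpha$ strictly \emph{below} $s_1$ (in the range $\max\{s_1-1/p,\,s_2-1/q\} < \alpha < s_1$). Then $(-\De)^{s_1}_p(d^\alpha) \gtrsim d^{-(ps_1-\alpha(p-1))}$ with a genuinely divergent positive exponent, and this single term absorbs the $q$-contribution $-\Ga^{q-1}\|h\|_{L^\infty}$ and $\|f\|_{L^\infty}$ alike near $\pa\Om$. This delivers the decay $|u| \leq \Ga\,d^\alpha$ for every $\alpha < s_1$, which is exactly enough for the advertised conclusion $C^{0,\sg}(\ov\Om)$ for all $\sg < s_1$ (your two-point argument needs only $\alpha \geq \sg$). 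The sharp decay $|u| \lesssim d^{s_1}$ that you assert is in general \emph{not} what the theorem gives; it is available only in the special regimes $s_1 = s_2$ or $s_1 > q's_2$ discussed in Remark~\ref{rem1opt}, and even there one must modify the barrier (as in \cite[Lemma 4.3]{iann}) rather than take the bare power $d^{s_1}$.
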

 
\begin{Corollary}\label{cor1}
 Suppose that $2\leq q\leq p<\infty$. Let $u\in W^{s_1,p}_0(\Om)$ be a solution to problem \eqref{probM} with  $f(x):=f(x,u)$, a Carath\'eodory function satisfying $|f(x,t)|\leq C_0 (1+|t|^{p^*_{s_1}-1})$, where $C_0 >0$ is a constant. Then, $u\in C^{0,\sg}(\ov\Om)$, for all $\sg\in (0,s_1)$, and \eqref{holderbd} holds. 
\end{Corollary}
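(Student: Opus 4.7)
The plan is to upgrade $u$ to $L^\infty(\Om)$ and then invoke Theorem \ref{bdryreg} directly. The structural hypothesis of that theorem is satisfied because $2\leq q\leq p$ together with $s_2<1$ yields $(q-p+2)s_2\leq 2s_2<2$. Moreover, by the embedding $W^{s_1,p}_0(\Om)\hookrightarrow W^{s_2,q}_0(\Om)$ recalled in the Preliminaries via \cite[Lemma 2.1]{DDS}, the function $u$ belongs to $\mc W_0(\Om)$, so the weak formulation of \eqref{probM} makes sense against test functions in $\mc W_0(\Om)$.

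The heart of the argument is establishing $u\in L^\infty(\Om)$ from the critical-growth bound on $f$. I would run a Moser-type iteration starting from the Sobolev embedding $u\in L^{p^*_{s_1}}(\Om)$. For $\beta\geq 0$ and a truncation parameter $M>0$, define $u_M:=\mathrm{sgn}(u)\min\{|u|,M\}$ and test \eqref{probM} with $\varphi:=u\,|u_M|^{\beta}\in\mc W_0(\Om)$. Applying the standard pointwise kernel inequality to both $A_p$ and $A_q$ (the latter contribution being nonnegative and therefore discarded) and invoking the fractional Sobolev embedding for $|u_M|^{(\beta+p)/p}$, one obtains
\[
\|u_M\|_{L^{(\beta+p)p^*_{s_1}/p}(\Om)}^{\beta+p}\leq C\int_\Om |f(x,u)|\,|u|^{\beta+1}\,dx \leq CC_0\int_\Om \bigl(1+|u|^{p^*_{s_1}-1}\bigr)|u|^{\beta+1}\,dx.
\]
The critical term $\int_\Om |u|^{\beta+p^*_{s_1}}\,dx$ is treated via the Brezis--Kato absorption trick: choose $K=K(u)$ so large that $\|u\|_{L^{p^*_{s_1}}(\{|u|>K\})}$ is small compared to the inverse of the Sobolev constant, split the integration into $\{|u|\leq K\}$ and $\{|u|>K\}$, estimate the high-energy piece by H\"older's inequality, and absorb it into the left-hand side. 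Sending $M\to\infty$ and iterating the resulting recursion along $\ga_{k+1}=\ga_k p^*_{s_1}/p$ with $\ga_0=p^*_{s_1}$ yields $\|u\|_{L^\infty(\Om)}\leq C_*\bigl(\|u\|_{L^{p^*_{s_1}}(\Om)}\bigr)<\infty$.

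With $u\in L^\infty(\Om)$ in hand, the critical growth assumption gives $\|f(\cdot,u)\|_{L^\infty(\Om)}\leq C_0\bigl(1+\|u\|_{L^\infty(\Om)}^{p^*_{s_1}-1}\bigr)$, and Theorem \ref{bdryreg} delivers $u\in C^{0,\sg}(\ov\Om)$ for every $\sg\in(0,s_1)$ together with the estimate \eqref{holderbd}. The main obstacle is the absorption step in the Moser iteration, since the critical nonlinearity scales precisely like the Sobolev-controlling term; once this is resolved by the cut-off at level $K$, the iteration and the final invocation of Theorem \ref{bdryreg} are routine.
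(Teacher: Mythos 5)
Your proof is correct and follows the same two-step structure as the paper: first establish $u\in L^\infty(\Om)$, then feed the resulting bound on $f(\cdot,u)$ into Theorem \ref{bdryreg}. The paper achieves the first step simply by invoking Theorem \ref{mainbound} together with Remark \ref{rem2}, which is itself proved by exactly the kind of Moser iteration with Brezis--Kato absorption that you sketch, so your argument is just a self-contained version of the same route.
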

 Now, we mention our strong comparison theorem.
 \begin{Theorem}[Strong Comparison principle]\label{strongcompreg}
 Suppose that $1<q\leq p<\infty$. Let $u,v\in W^{s_1,p}_0(\Om)\cap C(\ov\Om)$ be such that $0<v\leq u$ in $\Om$ with $u\not\equiv v$, and for some $K, K_1>0$, the following holds:
	\begin{equation*}
	 (-\De)_p^{s_1}v +(-\De)_q^{s_2}v \leq (-\De)_p^{s_1}u +(-\De)_q^{s_2}u \leq K  \quad\mbox{and }
		(-\De)_q^{s_2} v \geq -K_1,
		\ \ \ \mbox{weakly in }\Om.
	\end{equation*}
 Then $u>v$ in $\Om$. Moreover, for $s_1\neq q's_2$, $\frac{u-v}{d^{s_1}}\geq C>0$ in $\Om$.
\end{Theorem}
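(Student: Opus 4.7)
Set $w:=u-v$. From the hypotheses, $w\ge 0$ on $\mb R^N$ (both $u$ and $v$ vanish outside $\Om$), $w\in C(\ov\Om)$ and $w\not\equiv 0$. The statement splits naturally into two claims to be proved in turn: the pointwise strict positivity $w>0$ in $\Om$, and the quantitative Hopf-type rate $w\ge C\,d^{s_1}$ in $\Om$ under the non-resonance condition $s_1\ne q's_2$.

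\textbf{Strict positivity.} Argue by contradiction: suppose $w(x_0)=0$ at some $x_0\in\Om$. Theorem \ref{impintreg} (whose hypothesis $(q-p+2)s_2<2$ is automatic when $q\le p$) yields $u,v\in C^{0,\sg}_{\mathrm{loc}}(\Om)$ for some $\sg>\max\{s_1,s_2\}$ (since $ps_1/(p-1)>s_1$ and $qs_2/(q-1)>s_2$). This regularity suffices to make sense of $(-\De)_p^{s_1}u(x_0)$, $(-\De)_p^{s_1}v(x_0)$ and the $(q,s_2)$ analogues as absolutely convergent principal-value integrals depending continuously on $x_0$, so the weak differential inequality of the statement holds pointwise at $x_0$. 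Now $u(x_0)=v(x_0)$ together with $u\ge v$ on $\mb R^N$ gives $u(x_0)-u(y)\le v(x_0)-v(y)$ for every $y$; the strict monotonicity of $t\mapsto [t]^{\ell-1}$ and the fact that $u\not\equiv v$ (so strict inequality holds on an open set of positive Lebesgue measure) imply
\[
\bigl((-\De)_p^{s_1}u-(-\De)_p^{s_1}v\bigr)(x_0)+\bigl((-\De)_q^{s_2}u-(-\De)_q^{s_2}v\bigr)(x_0)<0,
\]
which contradicts the assumption $(-\De)_p^{s_1}v+(-\De)_q^{s_2}v\le(-\De)_p^{s_1}u+(-\De)_q^{s_2}u$. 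Hence $w>0$ throughout $\Om$.

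\textbf{Near-boundary lower bound.} By the previous step and continuity, $w\ge c_0(\varrho)>0$ on $\{d\ge\varrho\}$ for every $\varrho>0$, so only a boundary strip $\Om_\varrho$ remains. I would carry out a barrier argument based on $\psi\sim d^{s_1}$, smoothed inside $\Om$. The boundary asymptotic analysis underlying Theorem \ref{bdryreg} delivers uniform bounds on $\Om_\varrho$ for both $|(-\De)_p^{s_1}\psi|$ and $|(-\De)_q^{s_2}\psi|$ precisely when $s_1\ne q's_2$; at the resonance $s_1=q's_2$ the $(q,s_2)$-expansion of $(-\De)_q^{s_2}d^{s_1}$ acquires a logarithmic factor that destroys the rate. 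Linearising $A:=(-\De)_p^{s_1}+(-\De)_q^{s_2}$ around $v$ via the pointwise convexity estimates for $[\,\cdot\,]^{\ell-1}$, and exploiting the one-sided bound $(-\De)_q^{s_2}v\ge -K_1$ to control the $q$-contribution of the perturbation expansion, one shows that for $\ep>0$ sufficiently small the perturbation $\ep\psi$ is admissible: a weak comparison principle, combined with the ordering on $\{d=\varrho\}$ inherited from the strict positivity step and the vanishing on $\partial\Om$, forces $u-v\ge\ep\psi$ in $\Om_\varrho$. Gluing with the interior lower bound gives $(u-v)/d^{s_1}\ge C>0$ in all of $\Om$.

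\textbf{Main obstacle.} The core difficulty is the barrier step above. Because $A$ is strongly non-homogeneous and genuinely nonlinear, a naive perturbation $v\mapsto v+\ep\psi$ does not yield a clean differential inequality; one must simultaneously track the $p$- and $q$-linearizations and absorb their errors into the barrier. The bound $(-\De)_q^{s_2}v\ge -K_1$ is exactly what turns the otherwise uncontrolled nonlinear $q$-contribution into a bounded error term, while the exclusion $s_1\ne q's_2$ is what ensures $(-\De)_q^{s_2}d^{s_1}\in L^\infty(\Om_\varrho)$ (rather than merely $O(\log(1/d))$), which is needed for the barrier to deliver the sharp linear-in-$d^{s_1}$ rate.
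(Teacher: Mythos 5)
Your route is genuinely different from the paper's, and it has two concrete gaps. In the strict-positivity step you claim that the $C^{0,\sg}_{\mathrm{loc}}$ regularity of Theorem \ref{impintreg} makes $(-\De)_p^{s_1}u(x_0)$ an absolutely convergent integral. For $p>2$ and $u\in C^{0,\sg}$ this would require $\sg(p-1)>ps_1$, i.e.\ $\sg>ps_1/(p-1)$, whereas Theorem \ref{impintreg} only provides $\sg<\Theta\le ps_1/(p-1)$, strictly short of what is needed (and no principal-value cancellation rescues this for $p\ne 2$). The paper handles the pointwise interpretation by showing that continuous weak supersolutions are viscosity supersolutions (this is the content of \cite{JDSjga} behind Theorem \ref{strongmax}), not by absolute convergence. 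Your barrier step for the Hopf rate is left at the level of an outline, and you yourself flag the linearization of a strongly non-homogeneous nonlinear nonlocal operator around $v$ as the unresolved core difficulty.

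The paper's argument avoids both issues and does not separate the two conclusions. It fixes a ball $B_\rho(x_0)\subset\Om$ where $\sup_{B_\rho(x_0)}v<\inf_{B_\rho(x_0)}u-\e/2$, forms the nonlocal superposition $w_\Ga=\Ga v$ on $B_{\rho/2}(x_0)^c$ and $w_\Ga=u$ on $B_{\rho/2}(x_0)$, and estimates $(-\De)_p^{s_1}w_\Ga+(-\De)_q^{s_2}w_\Ga$ using \cite[Lemma 2.5]{DDS}. Scaling $v\mapsto\Ga v$ yields $\Ga^{p-1}(-\De)_p^{s_1}v+\Ga^{q-1}(-\De)_q^{s_2}v$, and the hypothesis $(-\De)_q^{s_2}v\ge -K_1$ is exactly what absorbs the cross term $(\Ga^{p-1}-\Ga^{q-1})K_1$ created by the non-homogeneity; for $\Ga>1$ close to $1$, the weak comparison principle of \cite[Proposition~2.6]{DDS} then gives $u\ge\Ga v$ in $\Om$. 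Both $u>v$ and the quantitative rate follow at once from Theorem \ref{strongmax} applied to $v$, since $u-v\ge(\Ga-1)v\ge(\Ga-1)c_1 d^{s_1}$. Your reading of the role of $s_1\ne q's_2$ (ensuring $(-\De)_q^{s_2}d^{s_1}$ is bounded near $\pa\Om$) is correct in spirit, but it enters through the Hopf lemma for $v$ rather than through a barrier touching $u-v$.
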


 \section{H\"older regularity results}
 We first recall some boundedness results. 
 \begin{Proposition}[Local boundedness]
 Suppose $1<q\leq p<\infty$.  Let $u\in W^{s_1,p}_{\rm loc}(\Om)\cap L^{p-1}_{s_1p}(\mb R^N) \cap L^{q-1}_{s_2q}(\mb R^N)$ be a local weak solution to the problem \eqref{probM}. Then, $u\in L^\infty_{\mathrm{loc}}(\Om)$, and the following holds
 \begin{align*}
 	\|u\|_{L^\infty(B_{r/2}(x_0))}  \leq C \left( \Xint-_{B_r}|u|^p dx \right)^{1/p}+ T_{p-1}(u;x_0,\frac{r}{2})+T_{q-1}(u;x_0,\frac{r}{2})^\frac{q-1}{p-1}+ \|f\|_{L^\infty(B_r)}^{1/(p-1)}+1,
 \end{align*}
 where $C(N,p,q,s_1)>0$ is a constant. 
\end{Proposition}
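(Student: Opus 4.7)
The plan is to employ a De Giorgi--Moser iteration scheme adapted to the non-homogeneous fractional setting, closely following the strategy used for the pure fractional $p$-Laplacian while carefully tracking the additional $q$-Laplacian contribution and the tail terms with two distinct scalings.

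First, I would derive a Caccioppoli-type estimate for the positive truncation $(u-k)_+$ on a pair of concentric balls $B_\rho \subset B_R \Subset B_r(x_0)$. Testing the weak formulation with $\phi = (u-k)_+ \eta^p$, where $\eta \in C_c^\infty(B_{(R+\rho)/2})$ is a standard cutoff equal to $1$ on $B_\rho$, and invoking the usual algebraic inequality for fractional $p$-Laplace-type kernels, yields control of the Gagliardo seminorms of $(u-k)_+\eta$ in $W^{s_1,p}$ and $W^{s_2,q}$ by the local $L^p$ norms of $(u-k)_+$, the source term $\|f\|_{L^\infty(B_r)}$, and contributions from the region where one variable exits $B_r$. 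After estimation, these nonlocal contributions yield precisely $T_{p-1}(u;x_0,r/2)^{p-1}$ and $T_{q-1}(u;x_0,r/2)^{q-1}$.

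Second, I would couple this energy estimate with the fractional Sobolev embedding $W^{s_1,p}_0(B_R) \hookrightarrow L^{p^*_{s_1}}(B_R)$ applied to $(u-k)_+\eta$, gaining the higher integrability that drives the iteration. The $q$-energy term, being of lower order under the assumption $q \leq p$, will be absorbed using Hölder's inequality on the bounded support of $\eta$ together with the continuous inclusion $W^{s_1,p}_0(B_R) \hookrightarrow W^{s_2,q}_0(B_R)$ recalled from \cite[Lemma~2.1]{DDS}.

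Third, I would set geometric sequences $R_n = (1 + 2^{-n})r/2$ and $k_n = k(1-2^{-n})$ and define $A_n := \int_{B_{R_n}}(u-k_n)_+^p \, dx$. Combining the Caccioppoli and Sobolev estimates over these scales produces a nonlinear recursion of the form $A_{n+1} \leq C\, b^n\, k^{-\alpha} A_n^{1+\delta}$ for some $\delta>0$, $b>1$, $\alpha>0$, \emph{provided} $k$ dominates the right-hand side of the asserted bound. A standard iteration lemma then forces $A_n \to 0$, giving $u \leq k$ on $B_{r/2}$. Applying the same argument to $-u$ closes the $L^\infty$ estimate.

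The main obstacle lies in the last balancing step: the tail of the $q$-Laplacian naturally enters the recursion in the $(q-1)$-homogeneous form $T_{q-1}(u;x_0,r/2)^{q-1}$, while the iteration itself is $p$-homogeneous (because the leading Sobolev gain comes from the $s_1,p$-term). In order to absorb this contribution into the threshold $k^{p-1}$, one must raise $T_{q-1}(u;x_0,r/2)^{q-1}$ to the power $1/(p-1)$, which is precisely the source of the exponent $(q-1)/(p-1)$ appearing on $T_{q-1}$ in the stated inequality. Keeping the constants uniform along the iteration while performing this rebalancing is the delicate technical point.
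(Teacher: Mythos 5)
Your proposal correctly identifies the De Giorgi iteration scheme (Caccioppoli estimate for $(u-k)_+$ with the cutoff $\eta^p$, coupled with the fractional Sobolev embedding and a geometric level/radius recursion) as the engine of the proof, and your explanation of why the $q$-tail appears raised to the power $(q-1)/(p-1)$---to rebalance a $(q-1)$-homogeneous contribution against a $(p-1)$-homogeneous threshold---is exactly the right account of that exponent. The paper itself defers the full proof of this Proposition to \cite{JDS2}, but your route matches the standard argument used there and in the related works \cite{DDS,brascoH} on which it is modeled, so this is essentially the same approach.
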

 Proceeding similar to \cite[Theorem 3.3]{chenJFA} and noticing that the terms corresponding to the fractional $q$-Laplacian will be non-negative (using the similar inequality, as in the fractional $p$-Laplacian, for different $G_\ba$ in there), we can prove the following boundedness property.
\begin{Theorem}\label{mainbound}
 Let $1<q\leq p<\infty$.  Let $u\in W^{s_1,p}_0(\Om)$ be a weak solution to problem \eqref{probM} with $f(x):=f(x,u)$ satisfying $|f(x,t)|\leq C_0(1+|t|^{p^*_{s_1}-1})$, for all $t\in\mb R$ and a.e. $x\in\Om$, where $C_0>0$ is a constant. Then, $u\in L^\infty(\Om)$. 
\end{Theorem}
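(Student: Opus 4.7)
The plan is to establish $L^\infty$ boundedness via a Moser iteration argument, following the scheme of [chenJFA, Theorem 3.3] for the fractional $p$-Laplacian with critical growth, and exploiting the fact (noted in the preceding paragraph of the excerpt) that the fractional $q$-Laplacian term contributes non-negatively to the left-hand side and can therefore be discarded. For $\ba\geq 1$ and a truncation level $M>0$, I would introduce the usual Lipschitz approximation $G_\ba$ of $t\mapsto [t]^{\ba}$ (equal to $[t]^{\ba}$ on $[-M,M]$ and affine outside), set $g_\ba(t):=\int_0^t(G_\ba'(s))^{1/p}\,ds$, and use $\phi=G_\ba(u)$ as a test function. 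Since $G_\ba$ is Lipschitz and $u\in W^{s_1,p}_0(\Om)$, we have $\phi\in W^{s_1,p}_0(\Om)$, and the embedding $W^{s_1,p}_0(\Om)\hookrightarrow W^{s_2,q}_0(\Om)$ from the Preliminaries ensures $\phi\in\mc W_0(\Om)$, so $\phi$ is admissible.

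The algebraic inequality $([a]^{p-1}-[b]^{p-1})(G_\ba(a)-G_\ba(b))\geq |g_\ba(a)-g_\ba(b)|^{p}$ and its $q$-analogue (with the corresponding $\tilde g_\ba$) make both $A_p(u,\phi,\mb R^N\times\mb R^N)$ and $A_q(u,\phi,\mb R^N\times\mb R^N)$ non-negative; we keep only the $p$-term as a lower bound. The fractional Sobolev embedding then yields
\begin{align*}
\|g_\ba(u)\|_{L^{p^*_{s_1}}(\Om)}^{p}\leq C\,[g_\ba(u)]_{W^{s_1,p}(\mb R^N)}^{p}\leq C\int_{\Om}f(x,u)\,G_\ba(u)\,dx,
\end{align*}
which, using $|f(x,u)|\leq C_0(1+|u|^{p^*_{s_1}-1})$, gives an inequality of the form $\|g_\ba(u)\|_{L^{p^*_{s_1}}}^{p}\lesssim\int_{\Om}(1+|u|^{p^*_{s_1}-1})|G_\ba(u)|\,dx$.

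The main obstacle is absorbing the critical term $\int_{\Om}|u|^{p^*_{s_1}-1}|G_\ba(u)|\,dx$ at the first step $\ba=p-1$. I would split the integral over $\{|u|\leq K\}$ and $\{|u|>K\}$; on the latter set, H\"older's inequality with exponents $p^*_{s_1}/p$ and $(p^*_{s_1}/p)'$ gives
\begin{align*}
\int_{\{|u|>K\}}|u|^{p^*_{s_1}-1}|G_\ba(u)|\,dx\leq\|u\|_{L^{p^*_{s_1}}(\{|u|>K\})}^{p^*_{s_1}-p}\,\|g_\ba(u)\|_{L^{p^*_{s_1}}(\Om)}^{p},
\end{align*}
and since $u\in L^{p^*_{s_1}}(\Om)$ by the Sobolev embedding, by absolute continuity $K$ can be chosen so large that the prefactor is smaller than $1/(2C)$, permitting absorption of this term into the left-hand side. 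The contribution over $\{|u|\leq K\}$, the linear term, and the truncation error are controlled in terms of $K$ and $\|u\|_{W^{s_1,p}_0(\Om)}$, yielding after passing $M\to\infty$ that $u\in L^{\al_0}(\Om)$ for some $\al_0>p^*_{s_1}$.

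Once this initial gain of integrability is available, the nonlinearity acts subcritically: for $\ba\geq p-1$ I would repeat the scheme with $G_\ba$ (no truncation needed in the RHS estimate beyond the standard Moser bookkeeping), using H\"older with exponents tuned so that the factor involving $\|u\|_{L^\al}$ on the right is finite at the previous step. This produces the standard recursion $\|u\|_{L^{\al_{n+1}}}\leq C_n\|u\|_{L^{\al_n}}$ with $\al_n\to\infty$ along a geometric sequence, and a Moser-type summability check on the constants $C_n$ delivers $\|u\|_{L^\infty(\Om)}\leq C$, as required. The non-negativity observation for the $q$-term is what makes the whole iteration essentially identical to the pure fractional $p$-Laplacian case, so no further adjustment is needed beyond verifying admissibility of $\phi$ in $\mc W_0(\Om)$ and the monotonicity inequality for exponent $q$.
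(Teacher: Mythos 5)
Your proposal follows precisely the route the paper indicates: adapt the Moser iteration of Chen--Mosconi--Squassina (their Theorem~3.3) by testing with the Lipschitz truncations $G_\ba(u)$, observing that the $A_q$ contribution is non-negative by the $q$-analogue of the monotonicity inequality and may therefore be discarded, and absorbing the critical term at the initial step by absolute continuity of $\int |u|^{p^*_{s_1}}$. This is the same argument the paper sketches (including the dependence on the truncation level $K$ recorded in Remark~\ref{rem2}), so no further comparison is needed.
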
	
\begin{Remark}\label{rem2}
 We remark that, as in \cite[Remark 3.4]{chenJFA}, the quantity $\|u \|_{L^\infty(\Om)}$ depends only on the constants $C_0$, $N$, $p$, $s_1$, $\|u\|_{W^{s_1,p}_0(\Om)}$ and the constant $M>0$ satisfying $\int_{\{|u|\geq M\}} |u|^{p^*_{s_1}}<\e$, for given $\e\in (0,1)$.
\end{Remark}
\begin{Corollary}\label{bound}
 Suppose that $1<q\leq p<\infty$. Let $u_{\vep}\in W^{s_1,p}_0(\Om)$, for $\vep\in (0,1)$, be the family of weak solution to problem \eqref{probM}  with $f_\e(x):=f_\e(x,u_\e)$ satisfying $|f_\e(x,t)|\leq C_0(1+|t|^{p^*_{s_1}-1})$, for all $t\in\mb R$ and a.e. $x\in\Om$, where $C_0$ is independent of $\e$. Assume that the sequence $\{ \|u_\vep\|_{W^{s_1,p}_0(\Om)} \}_\vep$ is bounded and $u_\vep\to u_0$ in $L^{p^*_{s_1}}(\Om)$, as $\vep\to 0$. 
 Then the sequence $\{ \|u_\vep\|_{L^\infty(\Om)} \}_\vep$ is also bounded.
\end{Corollary}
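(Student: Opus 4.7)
The plan is to apply Theorem \ref{mainbound} individually to each $u_\vep$ and then use Remark \ref{rem2} to argue that the resulting $L^\infty$ bounds depend only on data that is uniform in $\vep$. Concretely, Theorem \ref{mainbound} already gives us $u_\vep\in L^\infty(\Om)$ for every fixed $\vep\in(0,1)$; the whole point of Corollary \ref{bound} is therefore to upgrade the existence of the bound to its uniformity in $\vep$.

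According to Remark \ref{rem2}, the norm $\|u_\vep\|_{L^\infty(\Om)}$ is controlled by a quantity depending on $C_0$, $N$, $p$, $s_1$, $\|u_\vep\|_{W^{s_1,p}_0(\Om)}$, and any constant $M_\vep>0$ satisfying $\int_{\{|u_\vep|\geq M_\vep\}}|u_\vep|^{p^*_{s_1}}\,dx<\e_0$ for a chosen $\e_0\in(0,1)$. The constant $C_0$ is independent of $\vep$ by hypothesis, and $\|u_\vep\|_{W^{s_1,p}_0(\Om)}$ is uniformly bounded by assumption. So the only thing left is to show that $M_\vep$ can be chosen uniformly in $\vep$.

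For this I would invoke the hypothesis $u_\vep\to u_0$ in $L^{p^*_{s_1}}(\Om)$. A convergent sequence in $L^{p^*_{s_1}}$ is automatically equi-integrable: for any fixed $\e_0\in (0,1)$ there exists $M>0$ (independent of $\vep$) such that
\begin{align*}
\int_{\{|u_\vep|\geq M\}} |u_\vep|^{p^*_{s_1}}\,dx < \e_0 \quad \text{for every } \vep\in(0,1).
\end{align*}
Indeed, pick $M_0$ so large that $\int_{\{|u_0|\geq M_0/2\}}|u_0|^{p^*_{s_1}}\,dx<\e_0/2^{p^*_{s_1}+1}$; since $u_\vep\to u_0$ in $L^{p^*_{s_1}}$, for $\vep$ small we can absorb the difference via the triangle inequality, and the remaining finitely many indices $\vep$ are handled individually. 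This produces a single $M$ valid for the whole family.

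Plugging this uniform $M$ back into the dependence described by Remark \ref{rem2} then gives
\begin{align*}
\sup_{\vep\in(0,1)}\|u_\vep\|_{L^\infty(\Om)} \leq C\bigl(C_0, N, p, s_1, \sup_\vep \|u_\vep\|_{W^{s_1,p}_0(\Om)}, M\bigr) < \infty,
\end{align*}
which is the desired conclusion. The principal (and essentially only) obstacle is the equi-integrability step; everything else is a direct quotation of Theorem \ref{mainbound} and Remark \ref{rem2}. No fine regularity tools beyond the $L^\infty$ bound of Theorem \ref{mainbound} are needed.
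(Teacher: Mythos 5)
Your proof is correct and coincides with the argument the paper intends: Corollary \ref{bound} is stated without proof because it is exactly the combination of Theorem \ref{mainbound} and Remark \ref{rem2}, together with the observation that strong $L^{p^*_{s_1}}$-convergence gives uniform integrability of $\{|u_\vep|^{p^*_{s_1}}\}$, hence a choice of $M$ independent of $\vep$. One small point worth tightening: when you say "the remaining finitely many indices $\vep$ are handled individually," this is fine if $\{u_\vep\}$ is actually a sequence $\vep_n\to 0$ (which is how the paper uses the corollary in Section 4); for a genuine continuum $\vep\in(0,1)$ one should instead note that the set $\{\vep : \|u_\vep-u_0\|_{L^{p^*_{s_1}}}\geq\delta\}$ need not be finite, and the uniform-in-$\vep$ claim is then more naturally phrased as an eventual bound as $\vep\to 0$, or one invokes the Chebyshev/absolute-continuity version of the equi-integrability argument directly. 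This is a cosmetic issue and does not affect the validity of the proof.
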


 \subsection{Interior regularity}
 Let $u:\mb R^N\to \mb R$ be a measurable function and $h\in\mb R^N$, then we define 
    \begin{align*}
     u_h(x)=u(x+h), \quad \de_h u(x)=u_h(x)-u(x), \quad \de^2_h u(x)= \de_h(\de_h u(x))= u_{2h}(x)+u(x)-2u_h(x).
    \end{align*} 
 For $1\le m<\infty$ and $u\in L^m(\mb R^N)$, we set
 \begin{align*}
  	[u]_{\mc B^{\ba,m}_{\infty}(\mb R^N)}:= \sup_{|h|>0} \bigg\| \frac{\de^2_h u}{|h|^\ba}\bigg\|_{L^m(\mb R^N)} \mbox{ for }  \ba\in (0,2).
 \end{align*}
 Now, we prove our improved interior H\"older regularity result for local weak solutions.\\
\textbf{Sketch of the proof of Theorem \ref{impintreg}}:  
We first consider the case $\sg\in (0,s_1)$. For  $qs_2<ps_1+2(1-s_1)$, we claim that,
  for every $4h_0<R\leq R_0 - 5h_0$, there holds: 
\begin{align}\label{eqitr}
	\sup_{0<|h|<h_0} \bigg\| \frac{\de^2_h u}{|h|^{s_1}}\bigg\|^{m+1}_{L^{m+1}(B_{R-4h_0})} 
	\leq C K_2(u,m) \Big( \sup_{0<|h|<h_0} \bigg\| \frac{\de^2_h u}{|h|^{s_1}}\bigg\|^m_{L^m(B_{R+4h_0})} +1 \Big),
\end{align}
where $m\geq p$, $h_0=R_0/10$, $C= C(N,h_0,p,m,s_1)>0$ (which depends inversely on $h_0$) and 
{\small\begin{align*}
	K_2(u,m,R_0):=1+T_{p-1}(u;R_0)^{p-1}+T_{q-1}(u;R_0)^{q-1}+ \| u \|^{\frac{m(p-1)}{p-2}}_{L^\infty(B_{R_0})}+\| u \|^{q-1}_{L^\infty(B_{R_0})}+\|f\|_{L^\infty(B_{R_0})}.
\end{align*}}
Indeed, for $2\leq q\leq p<\infty$, \eqref{eqitr} is proved in \cite[Proposition 3.9]{JDS2}. For the other case, we set $S_1=\{(x,y)\in B_R\times B_R \; : \; |x-y|\leq 1\}$ and $S_2=(B_R\times B_R)\setminus S_1$. Then,  the proof, in this case, runs similarly by noting the following (using the same notations of the Proposition),
\begin{align*}
 &\left(\iint_{S_1}+\iint_{S_2}\right) \frac{|u(x)-u(y)|^{q-2}}{|x-y|^{N+qs_2}} \big|\eta^\frac{p}{2}(x)-\eta^\frac{p}{2}(y)\big|^2 \frac{|\de_hu(x)|^{\ba+1}}{|h|^{1+\nu\ba}} dxdy \nonumber \\
 &\leq C \| u \|^{q-p+1}_{L^\infty(B_{R_0})}\iint_{S_1} \frac{|u(x)-u(y)|^{p-2}}{|x-y|^{N+ps_1+\al-2}}  \frac{|\de_hu(x)|^{\ba}}{|h|^{1+\nu\ba}} dxdy +C \| u \|^{q-1}_{L^\infty(B_{R_0})}\int_{B_R} \frac{|\de_hu(x)|^{\ba}}{|h|^{1+\nu\ba}} dx \nonumber\\
 &\leq CK_2(u) [u]^m_{W^{\frac{s_1(p-2-\e)}{p-2},m}(B_{R+h_0})} + C K_2(u) \Big(\int_{B_R}  \frac{|\de_hu(x)|^{\frac{\ba m}{m-p+2}}}{|h|^\frac{(1+\nu\ba)m}{m-p+2}}dx +1\Big)
\end{align*}
where we have used H\"older's and Young's inequality together with the fact that $qs_2\leq ps_1+\al$ with $\al<2(1-s_1)$ and $\e\in (0,\frac{2-\al}{s_1}-2)$. Thus, $\tl I_{11}(q)$ (hence $I_{11}(q)$) is estimated as similar to $\tl I_{11}(p)$. Set 
  \begin{align*}
 	s_1-\sg > \frac{N}{p+i_\infty}, \quad h_0=\frac{\bar R_0}{64 i_\infty} \quad\mbox{for some }i_\infty\in\mb N
  \end{align*}
 and define the following sequences
  \begin{align*}
	m_i=p+i, \quad R_i= \frac{7\bar R_0}{8}-4(2i+1)h_0 \quad\mbox{for all } i=0,\dots,i_\infty.
  \end{align*} 
We take $\psi\in C_c^\infty(B_{(5\bar R_0)/8})$ such that 
  \[ 0\leq \psi \leq 1, \quad \psi=1 \mbox{ in } B_{\bar R_0/2}, \quad |\na\psi|\mbox{ and } |\na^2\psi|\leq C. \]
 Using the discrete Leibniz rule on $\de^2_h$,  we obtain
 \begin{align*}
 	[u\psi]_{\mc B^{s_1,m_{i_\infty}}_{\infty}(\mb R^N)}
 	\leq C  \Big[ \sup_{0<|h|<h_0}\bigg\| \frac{\de^2_h u}{|h|^{s_1}}\bigg\|_{L^{m_{i_\infty}}(B_{(3\bar R_0)/4})} + \| u \|_{L^{m_{i_\infty}}(B_{(3\bar R_0)/4})} \Big].
 \end{align*}
 The first term of the right hand side on the above expression is estimated on account of \eqref{eqitr}. Therefore, 
 employing the embedding result of the Besov spaces into the H\"older spaces, we get that $u\in C^{0,\sg}_{\mathrm{loc}}(\Om)$, for all $\sg\in (0,s_1)$. For the case $2\leq p<q$ and $ps_1<qs_2$, we proceed exactly as above by interchanging the role of $(p,s_1)$ with $(q,s_2)$ and the corresponding spaces. In this case, $\tl I_{11}(p)$ is estimated as above by choosing $\e>0$ such that $\e<\frac{2}{s_2}-q+p-2$.\\
 The higher regularity result follows by using the above almost $s_1$ (or $s_2$)-H\"older continuity result and proceeding on the similar lines of the proof of \cite[Theorem 5.2]{brascoH} (with minor modification as in the proof above).
\QED

 \subsection{Boundary regularity and maximum principle}
 In this subsection, we prove the boundary behavior of the weak solutions. 
 For $\al,\rho>0$ and $\ka\ge 0$, we set 
{\small \begin{align*}
 	d_e(x)=\begin{cases}
 		d(x) &\mbox{if }x\in\Om,\\
 		-d(x) &\mbox{if }x\in(\Om^c)_{\rho},\\
 		-\rho &\mbox{otherwise},
 	\end{cases}
  \quad \ov w_{\rho}(x)=\begin{cases}
  	(d_e(x)+\ka^{1/\al})_+^\al \quad &\mbox{if }x\in\Om\cup(\Om^c)_\rho, \\
  	0 \qquad&\mbox{otherwise},
  \end{cases}
 \end{align*}}
 where $(\Om^c)_{\rho}:=\{x\in\Om^c : \mathrm{dist}(x,\pa\Om)<\rho \}$. \\
 \textbf{Sketch of the proof of Theorem \ref{bdryreg}}: We proceed as below.
\begin{itemize}
 \item[(a)] By flattening the boundary $\pa\Om$ and using suitable $C^{1,1}(\mb R^N,\mb R^N)$ diffeoemorphisms, we prove that:
  there exist $\ka_1,\varrho_1>0$ such that for all $\ka\in[0,\ka_1)$ and  $\varrho\in(0,\varrho_1)$,
  \begin{align*}
  	(-\De)_p^{s_1}\ov w_\rho \begin{cases}
  		\geq C_1 (d+\ka^{1/\al})^{-(ps_1-\al(p-1))} \quad\mbox{for all }\al\in (0,s_1), \\
  		= h \quad\mbox{for all }\al\in [s_1,1) \mbox{ with }\al\neq p's_1
  	\end{cases} 
  \quad\mbox{weakly in }\Om_{\varrho},
  \end{align*} 
  where $C_1>0$ is a constant and $h\in L^\infty(\Om_{\varrho_1})$ (both are independent of $\ka\in (0,1)$). 
  Further, for all $\ka>0$ and $\al\in (0,s_1)$, $\ov w_\rho \in \widetilde{W}^{s_1,p}(\Om_{\varrho_1})$, and for $k=0$, $\ov w_\rho \in \widetilde{W}^{s_1,p}(\Om_{\varrho_1})$, whenever $\al> s_1-1/p$.
  \item[(b)] For $\Ga>1$, $\max\{s_1-1/p,s_2-1/q\}<\al<s_1$ and $\varrho>0$ (sufficiently small), we have, weakly in $\Om_{\varrho}$,
    \begin{align*}
     (-\De)_p^{s_1}(\Ga d^\al)+(-\De)_q^{s_2}(\Ga d^{\al}) &\geq  C_5 \Ga^{p-1} d^{-(ps_1-\al(p-1))}-\Ga^{q-1}\|h\|_{L^\infty(\Om_{\varrho})} \\
     &\geq C_6 \Ga^{p-1}d^{-(ps_1-\al(p-1))}.
    \end{align*}
 Then, employing the weak comparison principle in $\Om_{\varrho}$, for suitable $\Ga$, we get that $ u \leq \Ga d^{\al}$ in  $\Om$. Subsequently, we perform a similar process for $-u$ also.
 \item[(c)] The proof of the H\"older continuity can be completed by taking into account Theorem \ref{impintreg} and the boundary behavior presented in Step (b).  \QED
\end{itemize}
\begin{Remark}\label{rem1opt}
 We remark that in Theorem \ref{bdryreg}, the choice of $\sg$ can be optimal (that is, $\sg=s_1$) for the case $s_1=s_2$ or $s_1>q's_2$. Indeed, for $s_1=s_2$, we can show that the barrier function as constructed in \cite[Lemma 4.3]{iann} satisfies $(-\De)_q^{s_1} w\geq 0$ weakly in $B_{r}(e_N)\setminus\ov{B_1}$. Thus, for appropriate choice of $\Ga>1$, the Step (b) above can be improved. Similar arguments apply to the case $s_1>q's_2$ with a careful reading of the proof of \cite[Lemma 3.12]{JDS2}.
\end{Remark}
 \textbf{Proof of Corollary \ref{cor1}}:  When $f(x):=f(x,u)$, on account of Theorem \ref{mainbound} and Remark \ref{rem2}, we observe that  
 $$|f(x,u)|\leq C_0 (1+|u|^{p^*_{s_1}-1}) \leq C_0 (1+\|u\|_{L^\infty(\Om)}^{p^*_{s_1}-1})=:K>0.$$ 
Thus, the required result, in this case, follows from Theorem \ref{bdryreg}. \QED
 Next we state our strong maximum principle. The proof is contained in \cite{JDSjga} and done by proving that continuous weak super-solutions are viscosity super-solutions.
 \begin{Theorem}\label{strongmax}
 	Suppose that $1<q\leq p<\infty$. Let $g\in C(\mb R)\cap BV_{\rm loc}(\mb R)$ and let $u\in W^{s_1,p}_0(\Om)\cap C(\ov\Om)$ be such that 
 	\begin{align*}
 		(-\De)_p^{s_1} u+ (-\De)_q^{s_2} u +g(u)\geq g(0) \quad\mbox{weakly in }\Om.
 	\end{align*}
 	Further, assume that $u\not\equiv 0$ with $u\geq 0$ in $\Om$.  Then, there exists $c_1>0$ such that $u\geq c_1 {\rm dist}(\cdot,\pa\Om)^{s_1}$ in $\Om$.
 \end{Theorem}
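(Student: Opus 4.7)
\textbf{Sketch of the proof of Theorem \ref{strongmax}.} The plan is to first show that $u$ is a viscosity supersolution of the stated inequality, then deduce strict interior positivity $u>0$ in $\Om$ via a touching argument, and finally extract the quantitative boundary rate $u\geq c_1 d^{s_1}$ by comparing $u$ with the power-of-distance barrier already used in the proof of Theorem~\ref{bdryreg}.

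For the passage from weak to viscosity, fix $x_0\in\Om$ and a smooth $\varphi$ touching $u$ from below at $x_0$. Set $v_\rho=\varphi$ in $B_\rho(x_0)$ and $v_\rho=u$ elsewhere, so that $v_\rho\leq u$ everywhere. Testing the weak supersolution inequality against a nonnegative cutoff of $u-v_\rho$ and exploiting the monotonicity of $t\mapsto [t]^{p-1}$ (for both $p$ and $q$) together with continuity and $BV_{\rm loc}$ regularity of $g$, one gets
\begin{align*}
	(-\De)_p^{s_1}v_\rho(x_0)+(-\De)_q^{s_2}v_\rho(x_0)+g(u(x_0))\geq g(0)
\end{align*}
pointwise a.e.\ in $B_\rho(x_0)$. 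The tail classes $L^{p-1}_{s_1p}$ and $L^{q-1}_{s_2q}$ allow one to pass $\rho\to 0$ by dominated convergence, yielding the viscosity supersolution property at $x_0$.

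Interior positivity now follows from a standard contact argument: if $u(z_0)=0$ at some $z_0\in\Om$, the constant function $0$ is an admissible touching function from below at $z_0$, and the viscosity property forces $-C_1\int_\Om u(y)^{p-1}|z_0-y|^{-N-s_1p}\,dy-C_2\int_\Om u(y)^{q-1}|z_0-y|^{-N-s_2q}\,dy\geq 0$, contradicting $u\geq 0$, $u\not\equiv 0$. Thus $u>0$ in $\Om$ and, by continuity, there exist $B_{r_0}(y_0)\Subset\Om$ and $m>0$ with $u\geq m$ on $B_{r_0}(y_0)$.

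Finally, set $\underline w=c\,\ov w_\varrho$ with $\al=s_1$ and $\ka=0$, where $\ov w_\varrho$ is the barrier of Step~(a) in the proof of Theorem~\ref{bdryreg}. That step gives $(-\De)_p^{s_1}\underline w\in L^\infty(\Om_{\varrho})$, and the same asymptotic analysis of the distance function $d^{s_1}$, applied now to the $q$-operator, yields $(-\De)_q^{s_2}\underline w\in L^\infty(\Om_{\varrho})$ as well. Choose $c>0$ small enough that (i) $c\,\ov w_\varrho\leq u$ on $\mb R^N\setminus\Om_\varrho$ (possible by interior positivity) and (ii) $|g(c\,\ov w_\varrho)-g(0)|$ dominates the combined $L^\infty$ bound on the two nonlocal terms (possible by continuity of $g$ at $0$, since $c\,\ov w_\varrho\to 0$ as $c\to 0$). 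Then
\begin{align*}
	(-\De)_p^{s_1}(c\,\ov w_\varrho)+(-\De)_q^{s_2}(c\,\ov w_\varrho)+g(c\,\ov w_\varrho)\leq g(0)\quad\text{weakly in }\Om_{\varrho},
\end{align*}
and a weak comparison principle in $\Om_{\varrho}$ delivers $u\geq c\,\ov w_\varrho\gtrsim d^{s_1}$ in $\Om_{\varrho}$; combining with $u\geq m$ in the interior closes the proof. The main obstacle is calibrating the barrier against two nonlocal operators of different orders $(s_1,s_2)$ and growths $(p,q)$ while simultaneously absorbing the $g$-perturbation; this is exactly the purpose of the $L^\infty$ bound for $(-\De)_p^{s_1}d^{s_1}$ (and its $q$-analogue) developed for Theorem~\ref{bdryreg}, and where the $BV_{\rm loc}$ assumption on $g$ enters to ensure the absorption is quantitative.
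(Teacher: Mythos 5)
The paper gives no in-text proof but states that the argument in \cite{JDSjga} proceeds by showing continuous weak supersolutions are viscosity supersolutions; your sketch starts from the same idea, so the overall strategy is aligned. The weak-to-viscosity passage and the interior contact argument are reasonable in spirit. The real problems are in the final barrier step.

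First, the sign structure is not under control. With $\al=s_1$, Step~(a) of Theorem~\ref{bdryreg} only gives $(-\De)_p^{s_1}\ov w_\varrho=h\in L^\infty(\Om_{\varrho})$, with no assertion that $h\leq 0$; the analogous $q$-estimate (when it holds) likewise gives only a bounded function of undetermined sign. Writing the subsolution inequality out, you need
\begin{align*}
 c^{p-1}(-\De)_p^{s_1}\ov w_\varrho + c^{q-1}(-\De)_q^{s_2}\ov w_\varrho + g(c\,\ov w_\varrho)-g(0)\ \leq\ 0 ,
\end{align*}
and if either nonlocal term is positive you would need $g(c\,\ov w_\varrho)-g(0)$ to be \emph{negative and large} relative to $c^{p-1},c^{q-1}$, which continuity of $g$ at $0$ cannot provide (continuity makes it small, not negative). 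Your condition (ii) asks for $|g(c\ov w_\varrho)-g(0)|$ to dominate the nonlocal $L^\infty$ bound, but continuity at $0$ sends that quantity to zero; the logic is inverted. The standard fix (e.g., in the Del Pezzo--Quaas Hopf lemma) is to use a barrier that vanishes outside a ball tangent to $\pa\Om$ and \emph{jumps} to a strictly positive level on a compact set $D\Subset\Om$ where $u\geq m>0$. That jump produces a strictly negative contribution in both $(-\De)_p^{s_1}$ and $(-\De)_q^{s_2}$ of the barrier, of order independent of the small multiplicative constant, which is then used to absorb both the bounded nonlocal terms of $d^{s_1}$ and the $g$-discrepancy. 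Your sketch drops this ingredient, so the comparison inequality cannot be closed in general.

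Second, your claim that ``the same asymptotic analysis \dots applied to the $q$-operator yields $(-\De)_q^{s_2}\ov w_\varrho\in L^\infty(\Om_{\varrho})$'' silently uses the nonresonance condition $s_1\neq q's_2$ from Step~(a). Theorem~\ref{strongmax} has no such hypothesis, and at resonance one only gets a logarithmic singularity, whose sign must be tracked; this must be addressed (or one must verify that the log term has the favourable sign or is absorbable by the jump term above). Finally, the comparison-principle you invoke needs the zero-order term to be monotone; here $g$ is only $C\cap BV_{\rm loc}$, and the $BV_{\rm loc}$ hypothesis is presumably used via a Jordan decomposition $g=g_1-g_2$ into non-decreasing parts, with $g_2(u)-g_2(0)\geq 0$ absorbed on the right thanks to $u\geq 0$. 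Your sketch never uses $BV_{\rm loc}$ beyond a passing mention, so the role of that hypothesis is not actually filled.
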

    
\textbf{Sketch of the proof of Theorem \ref{strongcompreg}}:
 By continuity and the fact that $u\not\equiv v$, we can find $x_0\in\Om$, $\rho,\e>0$ such that $B_\rho(x_0)\subset\Om$ and 
	\begin{align}\label{eq190}
		\sup_{B_\rho(x_0)}  v < \inf_{B_\rho (x_0)} u -\e/2.
	\end{align}
  For $\Ga>1$ and for all $x\in\mb R^N$,  we define 
	{\small\begin{align*}
		w_\Ga(x)=\begin{cases}
			\Ga v(x) &\mbox{if }x\in B_{\rho/2}^c(x_0)\\
			u(x) &\mbox{if }x\in B_{\rho/2}(x_0).
		\end{cases}
	\end{align*}}
Taking into account the nonlocal super-position principle \cite[Lemma 2.5]{DDS}, we have, weakly in $\Om\setminus B_\rho(x_0)$,
 {\small\begin{align*}
 	(-\De)_p^{s_1}w_\Ga +(-\De)_q^{s_2} w_\Ga 
 	&\leq (-\De)_p^{s_1}u +(-\De)_q^{s_2} u + (\Ga^{p-1}-1)K+ (\Ga^{p-1}- \Ga^{q-1})K_1-C_1 \e^{p-1}-C_2 \e^{q-1}.
 \end{align*}}
We can choose $\Ga>1$ (close to $1$) to employ the weak comparison principle (\cite[Proposition 2.6]{DDS}), consequently, we get $w_\Ga\leq u$ in $\Om$. Hence, using \eqref{eq190} and Theorem \ref{strongmax}, we obtain $u\geq \Ga v>v$ in $\Om$, and  $\frac{u-v}{d^{s_1}}\geq \frac{(\Ga-1)v}{d^{s_1}}\geq C>0$ in $\Om$. \QED

\section{Applications}
We consider the problem \eqref{probM} with the choice $f(x):=f(x,u)$, where $f:\Om\times\mb R\to\mb R$ is a Carath\'eodory function satisfying the following:
\begin{itemize}
 \item[(A1)] $|f(x,t)|\leq C_0 (1+|t|^{r-1})$, for a.a. $x\in\Om$ and all $t\in\mb R$, where $C_0>0$ is a constant and $r\in (1,p^*_{s_1}]$.
 \item[(A2)] For a.a. $x\in\Om$, $f(x,t)t\leq 0$, for all $t\in [-\varsigma,\varsigma]$ ($\varsigma>0$) and $f(x,t)t\geq -c_1t^p$ ($c_1>0$) for all $t\in\mb R$.
 \item[(A3)] For $F(x,t):=\int_{0}^{t}f(x,\tau)d\tau$, $\lim_{|t|\to\infty}\frac{F(x,t)}{|t|^p}=\infty$  uniformly for a.a. $x\in\Om$.
 \item[(A4)] Let $r\in (p,p^*_{s_1})$, there exists $\nu\in \big( (r-p)\max\{N/(ps_1), 1\}, p^*_{s_1} \big)$ such that 
 \begin{align*}
 	\lim_{|t|\to\infty}\frac{f(x,t)t-pF(x,t)}{|t|^\nu}>0 \quad\mbox{uniformly a.e. }x\in\Om.
 \end{align*}
\end{itemize}
One example for $f$ satisfying (A1)-(A4) is given by $f(x,u)=-c_1|t|^{p-2}t+|t|^{r-2}t$.
 The Euler functional $\mc {J}: W^{s_1,p}_0(\Om) \ra\mb R$ associated to  problem \eqref{probM} is given by
 \[\mc {J}(u)= \frac{1}{p}\|u\|_{W^{s_1,p}_0(\Om)}^{p}+ \frac{1}{q}\|u\|_{W^{s_2,q}_0(\Om)}^{q} -\int_{\Om}F(x,u)dx. \]
First we prove the following Sobolev versus H\"older minimizer result.
\begin{Theorem}\label{sobvshol}
 Suppose that $2\leq q\leq p<\infty$ and (A1) holds. Let $u_0 \in W^{s,p}_0(\Omega)$,
  then for all $\al\in (0,s_1)$, the following are equivalent
 \begin{enumerate}
	\item[(i)] there exists $\sigma>0$ such that $\mc J(u_0+v) \geq \mc J(u_0)$ for all $v\in W^{s_1,p}_0(\Om)$, $\| v \|_{W^{s_1,p}_0(\Om)}\leq \sigma$,
	\item[(ii)] there exists $\omega>0$ such that $\mc J(u_0+v) \geq \mc J(u_0)$ for all $v\in W^{s_1,p}_0(\Om)\cap C^{0,\al}(\ov\Om)$ with $\| v \|_{C^{\al}(\ov\Om)}\leq \omega$.
 \end{enumerate}
\end{Theorem}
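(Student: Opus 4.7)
The plan is to argue each implication by contradiction, with $(ii)\Rightarrow(i)$ being the substantial direction, adapting the classical Brezis--Nirenberg scheme \cite{Brezis-Nirenberg} to the nonlocal $(p,q)$-setting. A key fact used in both directions, verified by testing the relevant minimality condition against $C_c^\infty(\Om)$-perturbations, is that $u_0$ is a weak critical point of $\mc J$ and therefore, by Corollary \ref{cor1}, lies in $L^\infty(\Om)\cap C^{0,\sg}(\ov\Om)$ for all $\sg\in(0,s_1)$ with $f(\cdot,u_0)\in L^\infty(\Om)$.

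For $(i)\Rightarrow(ii)$, assume (i) and extract, by contradiction, $v_n\in W^{s_1,p}_0(\Om)\cap C^{0,\al}(\ov\Om)$ with $\|v_n\|_{C^\al(\ov\Om)}\to 0$ and $\mc J(u_0+v_n)<\mc J(u_0)$. Uniform convergence $v_n\to 0$ gives $v_n\to 0$ in $L^r(\Om)$ for every $r\ge 1$, so (A1) and dominated convergence imply $\int_\Om F(\cdot,u_0+v_n)\to \int_\Om F(\cdot,u_0)$. The strict upper bound on $\mc J(u_0+v_n)$ combined with (A1)--(A2) keeps $\{v_n\}$ bounded in $W^{s_1,p}_0(\Om)$; extracting a weakly convergent subsequence, the weak limit is forced to be $0$ by the $L^p$ convergence. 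Weak lower semicontinuity of the $W^{s_1,p}_0$ and $W^{s_2,q}_0$ seminorms, pinched against the strict inequality, then forces both $\mc J(u_0+v_n)\to \mc J(u_0)$ and norm convergence $\|u_0+v_n\|_{W^{s_1,p}_0}\to\|u_0\|_{W^{s_1,p}_0}$; the uniform convexity of $W^{s_1,p}_0(\Om)$ upgrades this to $v_n\to 0$ strongly, contradicting (i) for $n$ large.

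For $(ii)\Rightarrow(i)$, assume (ii) and suppose there exist $\tilde v_n\in W^{s_1,p}_0(\Om)$ with $\|\tilde v_n\|_{W^{s_1,p}_0}\to 0$ and $\mc J(u_0+\tilde v_n)<\mc J(u_0)$. For each $n$ I would minimize $\mc J$ on the weakly compact ball $\mc B_n:=\{u : \|u-u_0\|_{W^{s_1,p}_0}\leq 1/n\}$ to obtain $u_n\in\mc B_n$ with $\mc J(u_n)<\mc J(u_0)$ (directly by weak lower semicontinuity if $r<p^*_{s_1}$, or via Ekeland's variational principle combined with Corollary \ref{bound} if $r=p^*_{s_1}$). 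The Lagrange multiplier rule produces $\la_n\geq 0$ for which $u_n$ weakly solves
\[ (1+p\la_n)(-\De)^{s_1}_p u_n + (-\De)^{s_2}_q u_n = f(x,u_n) + p\la_n (-\De)^{s_1}_p u_0 \quad\text{in }\Om, \]
with $\la_n=0$ corresponding to an interior minimum. Using the Euler--Lagrange identity for $u_0$ to rewrite the equation so that its right-hand side is bounded in $L^\infty(\Om)$ uniformly in $n$, I would then invoke Theorem \ref{mainbound}/Corollary \ref{bound} to extract $\|u_n\|_{L^\infty(\Om)}\leq C$, and Theorem \ref{bdryreg} (robust under the bounded perturbation of coefficients as $\la_n$ stays bounded) to get $\|u_n\|_{C^{0,\sg}(\ov\Om)}\leq C$ for some $\sg\in(\al,s_1)$. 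Since $u_n\to u_0$ in $W^{s_1,p}_0$, Arzel\`a--Ascoli yields $u_n\to u_0$ in $C^{0,\al}(\ov\Om)$, so that $\|u_n-u_0\|_{C^\al(\ov\Om)}<\omega$ for $n$ large and (ii) forces $\mc J(u_n)\geq \mc J(u_0)$, contradicting $\mc J(u_n)<\mc J(u_0)$.

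The main obstacle is controlling the perturbed PDE uniformly in $n$: one must verify that $\{\la_n\}$ is bounded (so the modified leading operator $(1+p\la_n)(-\De)^{s_1}_p+(-\De)^{s_2}_q$ still falls under the scope of Theorems \ref{impintreg}--\ref{bdryreg} with uniform constants), and that the additional source term $p\la_n(-\De)^{s_1}_p u_0$, which is \emph{a priori} only a distribution, can be absorbed through the weak identity $(-\De)^{s_1}_p u_0=f(x,u_0)-(-\De)^{s_2}_q u_0$ satisfied by $u_0$. The delicate point is that $(-\De)^{s_2}_q u_0$ is not individually known to lie in $L^\infty(\Om)$, so the substitution must be carried out at the level of weak formulations and the regularity proofs re-examined accordingly; this step constitutes the technical heart of the argument. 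In the critical case $r=p^*_{s_1}$, an additional appeal to Remark \ref{rem2} and Corollary \ref{bound} is needed to prevent concentration and preserve the uniform $L^\infty$ estimate along the sequence $\{u_n\}$.
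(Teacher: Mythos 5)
Your direction $(i)\Rightarrow(ii)$ (bounded sequence by coercivity plus dominated convergence, weak lower semicontinuity pinching the norms, uniform convexity to upgrade to strong convergence) is essentially the standard argument and is fine; the paper simply labels this implication "standard."

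The substantial direction $(ii)\Rightarrow(i)$, however, has a genuine gap. You minimize over the ball $\mc B_n=\{u:\|u-u_0\|_{W^{s_1,p}_0}\le 1/n\}$. The constraint functional is $G(u)=\|u-u_0\|^p_{W^{s_1,p}_0}$, whose G\^ateaux derivative is $G'(u)=p\,(-\De)^{s_1}_p(u-u_0)$. The Euler--Lagrange equation you write down, $(1+p\la_n)(-\De)^{s_1}_p u_n+(-\De)^{s_2}_q u_n=f(x,u_n)+p\la_n(-\De)^{s_1}_p u_0$, tacitly uses the identity $(-\De)^{s_1}_p(u_n-u_0)=(-\De)^{s_1}_p u_n-(-\De)^{s_1}_p u_0$, which is false for $p\ne 2$ because the operator is nonlinear. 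The correct multiplier condition reads
\begin{align*}
(-\De)^{s_1}_p u_n+(-\De)^{s_2}_q u_n + p\la_n (-\De)^{s_1}_p(u_n-u_0) = f(x,u_n),
\end{align*}
and the last term does \emph{not} separate. This is a genuinely different equation from \eqref{probM} with a different leading-order structure (a sum of three monotone operators), and the regularity results of the paper (Theorem \ref{impintreg}, Theorem \ref{bdryreg}, Corollaries \ref{cor1}, \ref{bound}) do not apply to it, nor can the troublesome term be transformed into an $L^\infty$ right-hand side via the weak equation for $u_0$. You flag the difficulty of $(-\De)^{s_2}_q u_0\notin L^\infty$, but the real obstruction is upstream: the object you would want to substitute for does not even appear.

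The paper circumvents exactly this issue by choosing a \emph{different constraint set}: it minimizes over $S_n=\{u:\mc K(u-u_0)\le\vep_n\}$ where $\mc K(v)=\frac{1}{p^*_{s_1}}\int_\Om|v|^{p^*_{s_1}}$. Now the constraint derivative $\mc K'(u_n-u_0)=|u_n-u_0|^{p^*_{s_1}-2}(u_n-u_0)$ is a pointwise function, so the multiplier appears as a lower-order source $\mu_n|u_n-u_0|^{p^*_{s_1}-2}(u_n-u_0)$ in the right-hand side, and the leading operator is unchanged; the equation then falls squarely under Corollaries \ref{cor1} and \ref{bound} once one checks that the full right-hand side has at most critical growth. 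Proving that growth bound (the two cases $\inf_n\mu_n>-\infty$ versus $\inf_n\mu_n=-\infty$, with a Moser-type test function $|u_n-u_0|^{\ka-1}(u_n-u_0)$ in the second case) is where the real work is done, and this step has no counterpart in your write-up. The paper also truncates the nonlinearity ($f_n=f_{k_n}$, $\mc J_n$) to secure the existence of the constrained minimizer at the critical exponent $r=p^*_{s_1}$, where the untruncated functional is not weakly lower semicontinuous; your proposed fallback to Ekeland's principle does not obviously produce a constrained \emph{minimizer} and is not developed.
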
	
\begin{proof}
From $(ii)$ and the density argument, we get that $\langle \mc J^\prime(u_0),\phi \rangle = 0$ for all $\phi\in W^{s_1,p}_0(\Om)$. Consequently, Theorem \ref{bdryreg} implies that $u_0\in C^{0,\al}(\ov\Om)$. To prove $(i)$, on the contrary assume that there exists $\tilde u_n\in W^{s_1,p}_0(\Om)$ such that $\tilde u_n\to u_0$ in $W^{s_1,p}_0(\Om)$ and $\mc J(\tilde u_n)<\mc J(u_0)$ for all $n\in\mb N$. Set 
	\[ \mc K(v)=\frac{1}{p^*_{s_1}}\int_{\Om}|v|^{p^*_{s_1}}, \ \vep_n:= \mc K(\tilde{u_n}-u_0) \ \mbox{and } S_n:= \{ u\in W^{s_1,p}_0(\Om) \ : \ \mc K (u-u_0) \leq \vep_n  \}. \]
 By the continuous embedding $W^{s_1,p}_0(\Om) \hookrightarrow L^{p^*_{s_1}}(\Om)$, we see that $\vep_n\to 0$ and hence $S_n$ is a closed convex subset of $W^{s_1,p}_0(\Om)$. Next, for all $t\in\mb R$ and $k>0$, set $[t]_k=\mathrm{sign}(t)\min\{ |t|,k\}$ and $f_k(x,t):=f(x,[t]_k)$ with $F_k(x,t):=\int_{0}^{t}f_k(x,\tau)d\tau$. Then, on account of the Lebesgue dominated convergence theorem, for fixed $n\in\mb N$ and $\sg_n\in (0, \mc J(u_0)-\mc J(\tilde u_n))$, there exists $k_n >\|u_0\|_{L^\infty(\Om)}+1$ such that 
	\begin{align*}
		\Big|\int_{\Om} F_n(x,\tilde u_n)dx -\int_{\Om} F(x,\tilde u_n)dx\Big| <\sg_n,
	\end{align*}
	where $F_n=F_{k_n}$. Furthermore, we define 
	\begin{align*}
		\mc J_n(u)= \frac{1}{p}\|u\|_{W^{s_1,p}_0(\Om)}^{p}+ \frac{1}{q}\|u\|_{W^{s_2,q}_0(\Om)}^{q}  -\int_{\Om}F_n(x,u) ~dx.
	\end{align*}
 From the structure of the function $F_n$, it is clear that there exists a minimizer $u_n\in S_n$ for $\mc J_n$.
	Moreover, by the choice of $\sg_n$ and $k_n$, we see that 
	\begin{align}\label{eq37}
		\mc J_n(u_n) \leq \mc J_n(\tilde u_n)\leq \mc J(\tilde u_n)+\sg_n < \mc J(u_0)=\mc J_n(u_0).
	\end{align}
It is clear that $\mc J_n$ is G\v{a}teaux differentiable at $u_n$. Therefore, there exists $\mu_n\leq 0$ such that 
	\begin{equation*}
	 (\mc P_n)\left\{\begin{array}{rllll}
	   (-\Delta)^{s_1}_{p}u_n+(-\Delta)^{s_2}_{q}u_n & = f_n(x,u_n)+\mu_n |u_n-u_0|^{p^*_{s_1}-2}(u_n-u_0) \; \text{ in } \Om, \\ u_n&=0 \quad \text{ in } \mathbb{R}^N\setminus \Om.
		\end{array}
		\right.
	\end{equation*}
 If $\inf_n \mu_n:=l >-\infty$, then from the fact that $u_0\in L^\infty(\Om)$, we have
	\begin{align*}
		|f_n(x,u_n)+\mu_n |u_n-u_0|^{p^*_{s_1}-2}(u_n-u_0)| \leq C (1+|u_n|^{p^*_{s_1}-1}).
	\end{align*}
If $\inf_n \mu_n:= -\infty$, there exists $M>0$ (independent of $n$) such that 
	\[ f_n(x,t)+\mu_n |t-u_0(x)|^{p^*_{s_1}-2}(t- u_0(x))<0 \quad\mbox{for a.a. } x\in\Om, \mbox{ and all }t\in(M,\infty). \]
 This implies that $u_n\leq M$ for all $n\in\mb N$. Further, since $\mc J^\prime(u_0)=0$, we take $w=|u_n-u_0|^{\ka-1}(u_n-u_0)$ as a test function and using \cite[Lemma 2.3]{iannsob} (consequently, the difference of terms involving $A_q$, below, is non-negative), we have 
	{\small\begin{align*}
		 (C\ka^{p-1})^{-1} \| (u_n-u_0)^\frac{p-1+\ka}{p}\|^p_{W^{s_1,p}_0(\Om)} 
			&\leq A_p(u_n,[u_n-u_0]^\ka)-A_p(u_0,[u_n-u_0]^\ka) \\
			&\quad+ A_q(u_n,[u_n-u_0]^\ka) -A_q(u_n,[u_n-u_0]^\ka) \\
			&=\int_{\Om} (f_n(x,u_n)-f(x,u_0))[u_n-u_0]^\ka +\mu_n\int_{\Om} |u_n-u_0|^{p^*_{s_1}-1+\ka}. 
	\end{align*}}
 Noting the uniform bound $\|u_n \|_{L^\infty(\Om)} \leq M$ and using H\"older's inequality, and subsequently passing to the limit $\ka\to\infty$, we obtain
	\begin{align*}
		-\mu_n \| u_n-u_0 \|^{p^*_{s_1}-1}_{L^{\infty}(\Om)} \leq C
	\end{align*}
 where $C>0$ is a constant independent of $n$. Thus, in all the cases, we obtain
	\[ |f_n(x,u_n)+\mu_n |u_n-u_0|^{p^*_{s_1}-2}(u_n-u_0)| \leq C (1+|u_n|^{p^*_{s_1}-1}) \quad\mbox{for all }n\in\mb N. \]    
 Moreover, from the construction of $u_n$,  it is clear that $\{ \|u_n\|_{W^{s_1,p}_0(\Om)}\}_n$ remains bounded. Then, applying Corollary \ref{bound}, we have $\| u_n \|_{L^\infty(\Om)}\leq C$, where $C>0$ is a constant independent of $n$. Consequently, from Corollary \ref{cor1}, we deduce that $\| u_n \|_{C^{\al}(\ov\Om)}\leq C$, for some positive constant $C$ independent of $n$, and all $\al\in (0,s_1)$. Therefore, by Arzela-Ascoli's theorem,  $u_n\to u_0$ in $C^{0,\al}(\ov\Om)$, for all $\al<s_1$. Thus, for sufficiently large $n$, we have $\| u_n-u_0 \|_{C^{\al}(\ov\Om)}\leq \omega$, and since $u_n$ is uniformly bounded in $L^\infty(\Om)$, $\mc J_n (u_n)=\mc J(u_n)$, for sufficiently large $n$. 
 This along with \eqref{eq37} contradicts the fact that $u_0$ is a minimizer for $\mc J$ in $W^{s_1,p}_0(\Om)\cap C^{0,\al}(\ov\Om)$.\\
 The proof of the other implication is standard.\QED
\end{proof}	
\begin{Theorem}
 Suppose that $2\leq q\leq p<\infty$. Then, there exist at least three non-trivial solutions $u\in W^{s_1,p}_0(\Om)\cap C^{0,\al}(\ov\Om)$, for all $\al\in(0,s_1)$, to problem \eqref{probM} with $f(x):=f(x,u)$ satisfying (A1)-(A4). Moreover,   if $u$ is non-negative and $s_1\neq q's_2$, then $u\geq c d^{s_1}$ in $\Om$.
\end{Theorem}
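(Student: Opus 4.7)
The strategy is the classical Brezis--Nirenberg multiplicity scheme, made accessible here by Theorem \ref{sobvshol}. Two constant-sign solutions $u^+>0>u^-$ are produced first, and a third one is then obtained via a mountain pass argument between them.

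I would first establish that $0$ is a local minimizer of $\mc J$. By (A2), $F(x,t)\leq 0$ for $|t|\leq\varsigma$, so whenever $\|v\|_{L^\infty(\Om)}\leq\varsigma$,
\begin{align*}
 \mc J(v)\geq \tfrac{1}{p}\|v\|^p_{W^{s_1,p}_0(\Om)}+\tfrac{1}{q}\|v\|^q_{W^{s_2,q}_0(\Om)}\geq 0=\mc J(0).
\end{align*}
Hence $0$ is a local minimizer of $\mc J$ in $C^{0,\al}(\ov\Om)$ and, by Theorem \ref{sobvshol}, in $W^{s_1,p}_0(\Om)$. Next, truncate by setting $f^\pm(x,t):=f(x,\pm t_\pm)$ with corresponding energies $\mc J^\pm$. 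Assumption (A4), together with $\nu>(r-p)N/(ps_1)$ and (A1), yields in the standard way the Cerami condition for $\mc J^\pm$; (A3) gives $\mc J^\pm(t\phi)\to-\infty$ as $t\to\infty$ for any $\phi\geq 0$, $\phi\not\equiv 0$, while $0$ is a local minimum as above. The mountain pass theorem then produces critical points $u^\pm$ of $\mc J^\pm$. Testing the equation satisfied by $u^+$ with its negative part and using the standard fractional $p$- and $q$-Laplacian inequalities for negative parts, one obtains $u^+\geq 0$ in $\Om$. Theorem \ref{strongmax} (applied with $g(t)=c_1[t]^{p-1}$, whose sign defect is absorbed by (A2)) gives $u^+\geq c d^{s_1}>0$, and Corollary \ref{cor1} provides $u^+\in C^{0,\al}(\ov\Om)$ for every $\al\in(0,s_1)$. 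Symmetrically, $u^-\leq -cd^{s_1}<0$ and $u^-\in C^{0,\al}(\ov\Om)$.

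To produce the third solution, I would argue that $u^\pm$ are local minimizers of $\mc J$ in $W^{s_1,p}_0(\Om)$. Using the Hopf bound $|u^\pm|\geq c d^{s_1}$ together with $u^\pm\in C^{0,\al}(\ov\Om)$, a weighted perturbation analysis shows that $\mc J$ and $\mc J^\pm$ agree on a neighborhood of $u^\pm$ consisting of strictly-signed functions in $C^{0,\al}(\ov\Om)$, so $u^\pm$ are local minimizers of $\mc J$ in $C^{0,\al}(\ov\Om)$; Theorem \ref{sobvshol} then upgrades this to $W^{s_1,p}_0(\Om)$. With two strict local minimizers $u^\pm$ of $\mc J$ and the Cerami condition satisfied by $\mc J$, the mountain pass theorem applied to paths in $W^{s_1,p}_0(\Om)$ connecting $u^-$ and $u^+$ produces a third critical point $u^*$ of $\mc J$, distinct from $u^\pm$ and from $0$ by the standard separation of critical levels. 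All three solutions inherit $C^{0,\al}(\ov\Om)$-regularity from Corollary \ref{cor1}; the final claim $u\geq c d^{s_1}$ for a non-negative solution (under $s_1\neq q's_2$) is an immediate consequence of Theorem \ref{strongmax}.

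The main obstacle is the local-minimality transfer described in the previous paragraph. Since $\al<s_1$ and $u^\pm\sim d^{s_1}$ near $\pa\Om$, $L^\infty$-small perturbations of $u^\pm$ can change sign inside the boundary strip $\Om_\varrho$, so a naive $C^{0,\al}$-comparison between $\mc J$ and $\mc J^\pm$ breaks down there; the resolution needs the Hopf lower bound from Theorem \ref{strongmax} combined with a weighted estimate controlling the deficit $\mc J-\mc J^\pm$ on the portion of the neighborhood where positivity would be lost, before invoking Theorem \ref{sobvshol}.
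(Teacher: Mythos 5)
Your route for the third solution has a genuine gap that no amount of boundary-behavior analysis will repair. You obtain $u^\pm$ as mountain pass critical points of the truncated functionals $\mc J_\pm$: since (A2) makes $0$ a strict local minimizer of $\mc J_\pm$ while (A3) makes $\mc J_\pm$ unbounded below, the mountain pass level is positive and $u^\pm$ is a saddle point of $\mc J_\pm$ (Morse index/critical group type $C_1\neq 0$), not a local minimizer. Consequently, even if you succeed in proving that $\mc J$ and $\mc J_\pm$ coincide on a $C^{0,\al}$-neighborhood of $u^\pm$ via the Hopf lower bound, that only transfers the nature of the critical point: you would get that $u^\pm$ is a saddle of $\mc J$, not a minimizer. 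The hypothesis you would need to feed into the ``two local minimizers $+$ mountain pass'' scheme is simply false here, and the obstacle you flagged (sign loss in the boundary strip under $C^{0,\al}$ perturbations) is downstream of this more basic problem.

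The paper's sketch takes a different route precisely for this reason. It also begins by showing, via Theorem \ref{sobvshol}, that $0$ is a local minimizer of $\mc J_\pm$ in the $W^{s_1,p}_0(\Om)$ topology, and then runs the mountain pass theorem for $\mc J_\pm$ to obtain $u_+>0$ and $u_-<0$ (so your first two solutions are obtained essentially as in the paper, modulo the Cerami-condition details from (A4)). But for the third solution the paper switches to topological/Morse-theoretic tools (critical groups, following the linear-operator template of D\"uzg\"un--Iannizzotto \cite{duzgun}): the local minimizer $0$ and the two mountain pass points $u_\pm$ have critical groups of different type, and a Morse-relation or Poincar\'e--Hopf index count forces the existence of a further critical point of $\mc J$ of indeterminate sign. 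If you insist on the ``two minima plus one pass'' picture instead, you would have to arrange for $u^\pm$ to be \emph{local minimizers} of $\mc J_\pm$ from the outset (e.g.\ by a minimization on a convex truncation set or a sub/supersolution sandwich), which is a structurally different argument from the one you wrote and is not what the mountain pass theorem delivers. The regularity ($C^{0,\al}$ via Theorem \ref{mainbound}, Corollary \ref{bound} and Corollary \ref{cor1}) and the Hopf bound for non-negative solutions (Theorem \ref{strongmax}) are invoked in the same way in both your sketch and the paper; only the mechanism producing the third solution is wrong.
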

\textbf{Sketch of the Proof}: We consider the truncation of the nonlinear term as $f_\pm(x,t)=f(x,\pm t^\pm)$ with $F_\pm(x,t):=\int_{0}^{t}f_\pm(x,\tau)d\tau$ and the corresponding Euler functionals as $\mc J_\pm$. By using Theorem \ref{sobvshol}, we can prove that $0$ is a local minimizer for $\mc J_+$ in $W^{s_1,p}_0(\Om)$ topology and it satisfies the mountain pass geometry. Thus, we obtain a positive solution $u_+$ to problem \eqref{probM}. Similar procedure yields a negative solution $u_-$. Subsequently, by using  topological tools (such as critical groups and Morse theory, see \cite{duzgun} for the linear operator case), we establish a third solution of undetermined sign nature. \QED 


\selectlanguage{English}

\end{document}